\theoremstyle{plain}
\newtheorem{lemma}{Lemma}
\newtheorem{corollary}{Corollary}
\newtheorem{proposition}{Proposition}
\theoremstyle{definition}
\newtheorem{example}{Example}
\theoremstyle{remark}
\newtheorem{remark}{Remark}
\numberwithin{equation}{section}
\begin{document}
\title[On $L_{p}$-estimates of singular integrals]{On $L_{p}$-estimates of
some singular integrals related to jump processes}
\author{R. Mikulevicius and H. Pragarauskas}
\address{University of Southern California, Los Angeles\\
Institute of Mathematics and Informatics, Vilnius University}
\date{March 14, 2012}
\subjclass{60H15}
\keywords{$L_{p}$-estimates of singular integrals, SPDEs, L\'{e}vy
processes, Zakai equation}

\begin{abstract}
We estimate fractional Sobolev and Besov norms of some singular integrals
arising in the model problem for the Zakai equation with discontinuous
signal and observation.
\end{abstract}

\maketitle

\section{ Introduction}

In a complete probability space $(\Omega ,\mathcal{F},\mathbf{P})$ with a
filtration of $\sigma $-algebras $\mathbb{F}=(\mathcal{F}_{t})$ satisfying
the usual conditions, the following linear stochastic integro-differential
parabolic equation of the fixed order $\alpha \in (0,2]$ was considered in H%
\"{o}lder classes (see \cite{mikprag1}): 
\begin{equation}
\left\{ 
\begin{array}{ll}
du(t,x)=\bigl(A^{({\scriptsize \alpha )}}u(t,x)+f(t,x)\bigr)%
dt+\int_{U}g(t,x,v)q(dt,dv) & \quad \text{in }{E}_{0,T}, \\ 
u(0,x)=u_{0}(x) & \quad \text{in }\mathbf{R}^{d},%
\end{array}%
\right.  \label{intr1}
\end{equation}%
where ${E}_{0,T}=\left[ 0,T\right] \times \mathbf{R}^{d},\,f$ is an $\mathbb{%
F}$-adapted measurable real-valued function on $\mathbf{R}^{d+1}$, 
\begin{eqnarray*}
&&A^{({\scriptsize \alpha )}}u(t,x) \\
&=&\int_{\mathbf{R}_{0}^{d}}[u(t,x+y)-u(t,x)-(\nabla u(t,x),y)\chi ^{(%
{\scriptsize \alpha )}}{(y)}]m^{({\scriptsize \alpha )}}(t,y)\frac{dy}{%
|y|^{d+{\scriptsize \alpha }}} \\
&&\quad +{}\bigl(b(t),\nabla u(t,x)\bigr)1_{{\scriptsize \alpha =1}%
}+\sum_{i,j=1}^{d}B^{ij}(t)\partial _{ij}^{2}u(t,x)1_{{\scriptsize \alpha =2}%
},\quad (t,x)\in \mathbf{R}^{d+1},
\end{eqnarray*}%
$\chi ^{({\scriptsize \alpha )}}{(y)}=1_{{\scriptsize \alpha >1}%
}+1_{|y|\leqslant 1}1_{{\scriptsize \alpha =1}},m^{({\scriptsize \alpha )}%
}(t,y)$ is a bounded measurable real-valued function homogeneous in $y$ of
order zero, $\mathbf{R}_{0}^{d}=\mathbf{R}^{d}\backslash \{0\},$ $%
b(t)=(b^{1}(t),\ldots ,b^{d}(t))$ is a bounded measurable function and $%
B(t)=(B^{ij}(t))$ is a bounded symmetric non-negative definite measurable
matrix-valued function; 
\begin{equation*}
q(dt,d\upsilon )=p(dt,d\upsilon )-\Pi (d\upsilon )dt
\end{equation*}%
is a martingale measure on a measurable space $([0,\infty )\times U,\mathcal{%
B}([0,\infty ))\otimes \mathcal{U})$ ($p(dt,d\upsilon )$ is a Poisson point
measure on $([0,\infty )\times U,\mathcal{B}([0,\infty ))\otimes \mathcal{U}%
) $ with the compensator $\Pi (d\upsilon )dt)$ and $g$ is an $\mathbb{F}$%
-adapted measurable real-valued function on $\mathbf{R}^{d+1}\times U.$ It
is the model problem for the Zakai equation (see \cite{za}) arising in the
nonlinear filtering problem with discontinuous observation (see \cite%
{mikprag1}). Let us consider the following example.

\begin{example}
\label{exa1}\textit{Assume that the signal process }$X_{t}$\textit{\ in }$%
R^{d}$\textit{\ is defined by }%
\begin{equation*}
X_{t}=X_{0}+\int_{0}^{t}b(X_{s})ds+W_{t}^{\alpha },t\in \lbrack 0,T],
\end{equation*}%
\textit{where }$b(x)=(b^{i}(x))_{1\leq i\leq d}$\textit{,}$x\in R^{d},$%
\textit{\ are measurable and bounded }$W_{t}^{\alpha }$\textit{\ is a }$d$%
\textit{-dimensional }$\alpha $\textit{-stable (}$\alpha \in (1,2)$\textit{)
L\'{e}vy process. Suppose}%
\begin{equation*}
W^{\alpha }{}_{t}=\int_{0}^{t}\int \upsilon \lbrack p(ds,d\upsilon )-m\left( 
\frac{\upsilon }{|\upsilon |}\right) \frac{d\upsilon ds}{|\upsilon
|^{d+\alpha }}],
\end{equation*}%
\textit{where }$m(\frac{\upsilon }{|\upsilon |})$\textit{\ is a smooth
bounded function (it characterizes the intensity of the jumps of }$W^{\alpha
}$\textit{\ in in the direction }$\frac{\upsilon }{|\upsilon |}$\textit{)
and }$p(ds,d\upsilon )$\textit{\ is a Poisson point measure on }$[0,\infty
)\times R_{0}^{d}$\textit{\ with }%
\begin{equation*}
\mathbf{E}p(ds,d\upsilon )=m(\frac{\upsilon }{|\upsilon |})\frac{d\upsilon ds%
}{|\upsilon |^{d+\alpha }}.
\end{equation*}%
\textit{Assume }$X_{0}$\textit{\ has a density function }$u_{0}\left(
x\right) ,$\textit{\ and the observation }$Y_{t}$\textit{\ is discontinuous,
with jump intensity depending on the signal, such that}%
\begin{equation*}
Y_{t}=\int_{0}^{t}\int_{|y|>1}y\hat{p}(ds\,dy)+\int_{0}^{t}\int_{|y|%
\leqslant 1}y\hat{q}(ds,dy),
\end{equation*}%
\textit{where }$\hat{p}(ds,dy)$\textit{\ is a point measure on }$[0,\infty
)\times R_{0}^{d}$\textit{\ not having common jumps with }$W^{\alpha }$%
\textit{\ with a compensator }$\rho (X_{t},y)\pi (dy)$\textit{\ and }$\hat{q}%
(dt,dy)=\hat{p}(dt,dy)-\pi (dy)dt$\textit{. Assume }$C_{1}\geqslant \rho
(x,y)\geqslant c_{1}>0,\pi (dy)$\textit{\ is a measure on }$R_{0}^{d}$%
\textit{\ such that}%
\begin{equation*}
\int |y|^{2}\wedge 1\pi (dy)<\infty ,
\end{equation*}%
\textit{and }$\int [\rho (x,y)-1]^{2}\pi (dy)$\textit{\ is bounded. Then for
every function }$\varphi $\textit{\ such that }$E[\varphi
(X_{t})^{2}]<\infty ,$\textit{\ the optimal mean square estimate for }$%
\varphi \left( X_{t}\right) ,\,t\in \left[ 0,T\right] $\textit{, given the
past of \ the observations }$F_{t}^{Y}=\sigma (Y_{s},s\leqslant t),$\textit{%
\ is of the form }%
\begin{equation*}
\hat{\varphi}_{t}=\mathbf{E}\bigl[\varphi (X_{t})|\mathcal{F}_{t}^{Y}\bigr]=%
\frac{\mathbf{\widetilde{E}}\big[\varphi (X_{t})\zeta _{t}|\mathcal{F}%
_{t}^{Y}\big]}{\mathbf{\widetilde{E}}\big[\zeta _{t}|\mathcal{F}_{t}^{Y}\big
]},
\end{equation*}%
\textit{where }$\zeta _{t}$\textit{\ is the solution of the linear equation}%
\begin{equation*}
d\zeta _{t}=\zeta _{t-}\int [\rho (X_{t-},y)-1]\hat{q}(dt,dy)
\end{equation*}%
\textit{and }$d\widetilde{P}=\zeta \left( T\right) ^{-1}dP.$\textit{\ Under
assumptions of differentiability, one can easily show that if }$v(t,x)$%
\textit{\ is an }$F=(F_{t+}^{Y})$\textit{-adapted unnormalized filtering
density function }%
\begin{equation}
\mathbf{\widetilde{E}}\left[ \varphi \left( X_{t}\right) \zeta _{t}|\mathcal{%
F}_{t}^{Y}\right] =\int v\left( t,x\right) \psi \left( x\right) \,dx,
\label{prf0}
\end{equation}%
\textit{then it is a solution of the Zakai equation }%
\begin{eqnarray}
&&dv(t,x)  \label{prf1} \\
&=&v(t,x)\int [\rho (x,y)-1]\hat{q}(dt,dy)+\bigg\{-\partial _{i}\bigl(%
b^{i}(x)v(t,x)\bigr)  \notag \\
&&+\int_{\mathbf{R}_{0}^{d}}[v(t,x+y)-v(t,x)-(\nabla v(t,x),y)]m(\frac{-y}{%
|y|})\frac{dy}{|y|^{d+\alpha }}\bigg\},  \notag \\
v(0,x) &=&u_{0}(x).  \notag
\end{eqnarray}%
\textit{Since }$Y_{t},t\geqslant 0,$\textit{\ and }$X_{t},t\geqslant 0,$%
\textit{\ are independent with respect to }$\widetilde{P},$\textit{\ for }$%
u\left( t,x\right) =v\left( t,x\right) -u_{0}\left( x\right) $\textit{\ we
have an equation whose model problem is of the type given by (\ref{intr1}).
Indeed, according to \cite{grig1}, for any infinitely differentiable
function }$\varphi $\textit{\ on }$\mathbf{R}^{d}$\textit{\ with compact
support, the conditional expectation }$\pi _{t}(\varphi )=\widetilde{E}\left[
\varphi \left( X_{t}\right) \zeta _{t}|\mathcal{F}_{t}^{Y}\right] $\textit{\
satisfies the equation}%
\begin{eqnarray*}
d\pi _{t}(\varphi ) &=&\int \pi _{t}\big(\varphi \lbrack \rho (\cdot ,y)-1]%
\big)\hat{q}(dt,dy)+\pi _{t}\bigg\{(b,\nabla \varphi ) \\
&&+\int_{\mathbf{R}_{0}^{d}}\big[\varphi (\cdot +y)-\varphi -(\nabla \varphi
,y)\chi ^{(\alpha )}(y)\big]m(t,\frac{y}{|y|})\frac{dy}{|y|^{d+\alpha }}%
\bigg\}dt.
\end{eqnarray*}%
\textit{Assuming (\ref{prf0}) and integrating by parts, we obtain (\ref{prf1}%
).}
\end{example}

In terms of Fourier transform,%
\begin{equation*}
A^{({\scriptsize \alpha )}}v(x)=\mathcal{F}^{-1}\left[ \psi ^{({\scriptsize %
\alpha )}}(t,\xi )\mathcal{F}v(\xi )\right] (x),
\end{equation*}%
with 
\begin{eqnarray*}
{}\psi ^{({\scriptsize \alpha )}}(t,\xi ) &=&i(b(t),\xi )1_{{\scriptsize %
\alpha =1}}-\sum_{i,j=1}^{d}B^{ij}(t)\xi _{i}\xi _{j}1_{{\scriptsize \alpha
=2}} \\
&&-C\int_{S^{d-1}}|(w,\xi )|^{{\scriptsize \alpha }}\Big[1-i\Big(\tan \frac{%
\alpha \pi }{2}\mbox{sgn}(w,\xi )1_{{\scriptsize \alpha \neq 1}} \\
&&-\frac{2}{\pi }\mbox{sgn}(w,\xi )\ln |(w,\xi )|1_{{\scriptsize \alpha =1}}%
\Big)\Big]m^{(\alpha )}(t,w)dw,
\end{eqnarray*}%
where $C=C(\alpha ,d)$ is a positive constant, $S^{d-1}$ is the unit sphere
in $\mathbf{R}^{d}$ and $dw$ is the Lebesgue measure on it. It was shown in 
\cite{mikprag1} that in H\"{o}lder classes the solution of (\ref{intr1}) can
be represented as%
\begin{equation}
u(t,x)=Rf(t,x)+\widetilde{R}g(t,x)+T_{t}u_{0}(x),  \label{maf1}
\end{equation}%
where%
\begin{eqnarray}
Rf(t,x) &=&\int_{0}^{t}G_{s,t}\ast f(s,x)ds,  \notag \\
\widetilde{R}g(t,x) &=&\int_{0}^{t}\int_{U}G_{s,t}\ast g(s,x,\upsilon
)q(ds,d\upsilon ),  \label{rez} \\
T_{t}u_{0}(x) &=&G_{0,t}\ast u_{0}(x),  \notag
\end{eqnarray}%
with 
\begin{equation*}
G_{s,t}(x)=\mathcal{F}^{-1}\left( \exp \left\{ \int_{s}^{t}\psi ^{(%
{\scriptsize \alpha )}}(r,\xi )dr\right\} \right) x,\quad s\leq t,x\in 
\mathbf{R}^{d},
\end{equation*}%
and $\ast $ denoting the convolution with respect to the space variable $%
x\in \mathbf{R}^{d}.$ According to \cite{SaT94}, $G_{s,t}$ is the density
function of an $\alpha $- stable distribution, and $A^{(\alpha )}$ is the
fractional Laplacian if $b=0$ and $m^{(a)}=1$. \ 

In order to estimate the $L_{p}$-norm of the fractional derivative 
\begin{equation*}
\partial ^{{\scriptsize \alpha }}u(t,x)=-\mathcal{F}^{-1}[|\xi |^{%
{\scriptsize \alpha }}\mathcal{F}u(t,\xi )]
\end{equation*}%
of $u$ in (\ref{maf1}), we need the estimates for $\partial ^{\alpha
}Rf,\partial ^{\alpha }\tilde{R}g$ and $\partial ^{\alpha }T_{t}u_{0}.$ It
was derived in \cite{MiP922}, that%
\begin{equation*}
|\partial ^{\alpha }Rf|_{L_{p}}\leq C|f|_{L_{p}}.
\end{equation*}%
According to Corollary \ref{cor5} below (it provides two-sided estimates for
the moments of a martingale), 
\begin{equation*}
\mathbf{E}|\partial ^{\alpha }\widetilde{R}g|_{L_{p}}^{p}\leq C[\mathbf{E}%
I_{1}+\mathbf{E}I_{2}],
\end{equation*}%
where%
\begin{eqnarray}
I_{1} &=&\int_{0}^{T}\int_{\mathbf{R}^{d}}\left\{ \int_{0}^{t}\int_{U}\left[
\partial ^{{\scriptsize \alpha }}G_{s,t}\ast g(s,x,\upsilon )\right] ^{2}\Pi
(d\upsilon )ds\right\} ^{p/2}dxdt,  \label{nf1} \\
I_{2} &=&\int_{0}^{T}\int_{0}^{t}\int_{\mathbf{R}^{d}}\int_{U}|\partial ^{%
{\scriptsize \alpha }}G_{s,t}\ast g(s,x,\upsilon )|^{p}\Pi (d\upsilon
)dxdsdt.  \label{nf11}
\end{eqnarray}

In this paper, we estimate the singular integrals of $I_{1}$- and $I_{2}$%
-types related to $\widetilde{R}g(t,x)$ in (\ref{rez}) in Sobolev and Besov
spaces. If $\alpha =2$ and $B$ is $d{\times }d$-identity matrix, the
estimate of $I_{1}$-type was proved in \cite{kry1}. This estimate for (\ref%
{nf1}) was generalized in \cite{kim1} for the case $m^{({\scriptsize \alpha )%
}}=1$, $b=0$ $($in this case $A^{(a)}$ is the fractional Laplacian). Our
derivation of an estimate for (\ref{nf1}) follows a slightly different idea
communicated by N.V. Krylov. The problem cannot be reduced to a case with
fractional Laplacian. In fact, $m^{(\alpha )}$ can be zero on a substantial
set (see Remark \ref{r1}). The operator $\tilde{R}g$ in H\"{o}lder-Zygmund
classes was estimated in \cite{mikprag1}. The results of this paper were
applied in \cite{mikprag2} to solve the model problem above in the
fractional Sobolev spaces.

The paper consists of five sections. In Section 2, we introduce the notation
and state the main results. In Section 3, we derive the two-sided $p$-moment
estimates of discontinuous martingales that explain the need to consider (%
\ref{nf1}) and (\ref{nf11}). In the last two sections, we present the proofs
of the main results.

\section{Notation, function spaces and main results}

\subsection{Notation}

The following notation will be used in the paper.

Let $\mathbf{N}_{0}=\{0,1,2,\ldots \},\mathbf{R}_{0}^{d}=\mathbf{R}%
^{d}\backslash \{0\}.$ If $x,y\in \mathbf{R}^{d},$\ we write 
\begin{equation*}
(x,y)=\sum_{i=1}^{d}x_{i}y_{i},\,|x|=\sqrt{(x,x)}.
\end{equation*}

We denote by $C_{0}^{\infty }(\mathbf{R}^{d})$ the set of all infinitely
differentiable functions on $\mathbf{R}^{d}$ with compact support.

We denote the partial derivatives in $x$ of a function $u(t,x)$ on $\mathbf{R%
}^{d+1}$ by $\partial _{i}u=\partial u/\partial x_{i}$, $\partial
_{ij}^{2}u=\partial ^{2}u/\partial x_{i}\partial x_{j}$, etc.;$\,\partial
u=\nabla u=(\partial _{1}u,\ldots ,\partial _{d}u)$ denotes the gradient of $%
u$ with respect to $x$; for a multiindex $\gamma \in \mathbf{N}_{0}^{d}$ we
denote%
\begin{equation*}
\partial _{x}^{{\scriptsize \gamma }}u(t,x)=\frac{\partial ^{|{\scriptsize %
\gamma |}}u(t,x)}{\partial x_{1}^{{\scriptsize \gamma _{1}}}\ldots \partial
x_{d}^{{\scriptsize \gamma _{d}}}}.
\end{equation*}%
For $\alpha \in (0,2]$ and a function $u(t,x)$ on $\mathbf{R}^{d+1}$, we
write 
\begin{equation*}
\partial ^{{\scriptsize \alpha }}u(t,x)=-\mathcal{F}^{-1}[|\xi |^{%
{\scriptsize \alpha }}\mathcal{F}u(t,\xi )](x),
\end{equation*}%
where 
\begin{equation*}
\mathcal{F}h(t,\xi )=\int_{\mathbf{R}^{d}}\,\mathrm{e}^{-i({\scriptsize \xi
,x)}}h(t,x)dx,\mathcal{F}^{-1}h(t,\xi )=\frac{1}{(2\pi )^{d}}\int_{\mathbf{R}%
^{d}}\,\mathrm{e}^{i({\scriptsize \xi ,x)}}h(t,\xi )d\xi .
\end{equation*}

The letters $C=C(\cdot ,\ldots ,\cdot )$ and $c=c(\cdot ,\ldots ,\cdot )$
denote constants depending only on quantities appearing in parentheses. In a
given context the same letter will (generally) be used to denote different
constants depending on the same set of arguments.

\subsection{Function spaces\label{test}}

Let $\mathcal{S}(\mathbf{R}^{d})$ be the Schwartz space of smooth
real-valued rapidly decreasing functions. Let $V$ be a Banach space with a
norm $\vert \cdot\vert_V $. The space of $V$-valued tempered distributions
we denote by $\mathcal{S}^{\prime }(\mathbf{R}^{d},V)$ ($f\in \mathcal{S}%
^{\prime }(\mathbf{R}^{d},V)$ is a continuous $V$-valued linear functional
on $\mathcal{S}(\mathbf{R}^{d})$).

For a $V$-valued measurable function $h$ on $\mathbf{R}^{d}$ and $%
p\geqslant1 $ we denote 
\begin{equation*}
|h|_{V,p}^{p}=\int_{\mathbf{R}^{d}}|h(x)|_{V}^{p}dx.
\end{equation*}

Further, for a characterization of our function spaces we will use the
following construction (see \cite{BeL76}). By Lemma 6.1.7 in \cite{BeL76},
there exists a function $\phi \in C_{0}^{\infty }(\mathbf{R}^{d})$ such that 
$\mathrm{supp}\,\phi =\{\xi :\frac{1}{2}\leqslant |\xi |\leqslant 2\}$, $%
\phi (\xi )>0$ if $2^{-1}<|\xi |<2$ and 
\begin{equation*}
\sum_{j=-\infty }^{\infty }\phi (2^{-j}\xi )=1\quad \text{if }\xi \neq 0.
\end{equation*}%
Define the functions $\varphi _{k}\in \mathcal{S}(\mathbf{R}^{d}),$ $%
k=1,\ldots ,$ by 
\begin{equation*}
\mathcal{F}\varphi _{k}(\xi )=\phi (2^{-k}\xi ),
\end{equation*}%
and $\varphi _{0}\in \mathcal{S}(\mathbf{R}^{d})$ by 
\begin{equation*}
\mathcal{F}\varphi _{0}(\xi )=1-\sum_{k\geqslant 1}\mathcal{F}\varphi
_{k}(\xi ).
\end{equation*}

Let $\beta\in \mathbf{R}$ and $p\geqslant1$. We introduce the Besov space $%
B_{pp}^{{\scriptsize \beta }}=B_{pp}^{{\scriptsize \beta }}(\mathbf{R}%
^{d},V) $ of generalized functions $f\in \mathcal{S}^{\prime }(\mathbf{R}%
^{d},V)$ with finite norm%
\begin{equation*}
|f|_{B_{pp}^{{\scriptsize \beta }}(\mathbf{R}^{d},V)}=\Bigg\{%
\sum_{j=0}^{\infty }2^{j{\scriptsize \beta} p}|\varphi _{j}\ast f|_{V,p}^{p}%
\Bigg\}^{1/p},
\end{equation*}%
the Sobolev space $H_{p}^{{\scriptsize \beta }}(\mathbf{R}^{d},V)$ of $f\in 
\mathcal{S}^{\prime }(\mathbf{R}^{d},V)$ with finite norm%
\begin{eqnarray}
|f|_{H_{p}^{{\scriptsize \beta }}(\mathbf{R}^{d},V)} &=&|\mathcal{F}%
^{-1}((1+|\xi |^{2})^{{\scriptsize \beta /2}}\mathcal{F}f)|_{V,p}
\label{nf5} \\
&=&\left\vert (I-\Delta )^{{\scriptsize \beta /2}}f\right\vert _{V,p}, 
\notag
\end{eqnarray}%
where $I$ is the identity map and $\Delta $ is the Laplacian in $\mathbf{R}%
^{d}$, and the space $\tilde{H}_{p} ^{{\scriptsize \beta }}(\mathbf{R}%
^{d},V) $ of $f\in \mathcal{S}^{\prime }(\mathbf{R}^{d},V)$ with finite norm%
\begin{equation}
|f|_{\tilde{H}_{p}^{{\scriptsize \beta }}(\mathbf{R}^{d},V)}=\left\{\int_{%
\mathbf{R}^{d}}\left( \sum_{j=0}^{\infty }2^{2{\scriptsize \beta} j}|\varphi
_{j}\ast f(x)|_{V}^{2}\right) ^{p/2}dx \right\}^{1/p}.  \label{nf0}
\end{equation}

Similarly we introduce the corresponding spaces of generalized functions on $%
E_{a,b}=[a,b]\times \mathbf{R}^{d}$ and $\tilde{E}_{a,b}=\{ (s,t,x)\in 
\mathbf{R}^{d+2}:a\leq s\leq t\leq b,x\in \mathbf{R}^{d}\} . $

The spaces $B_{pp}^{{\scriptsize \beta }}(E_{a,b},V)$, $H_{p}^{{\scriptsize %
\beta }}(E_{a,b},V)$ and $\tilde{H}_{p}^{{\scriptsize \beta }}(E_{a,b},V)$
consist of all measurable $\mathcal{S}^{\prime}(\mathbf{R}^{d},V)$-valued
functions on $[a,b]$ with finite corresponding norms:%
\begin{eqnarray}
|f|_{B_{pp}^{{\scriptsize \beta }}(E_{a,b},V)}= \Bigg\{\int_{a}^{b}|
f(t,\cdot )|_{B_{pp}^{{\scriptsize \beta }}(\mathbf{R}^{d},V)}^{p}dt\Bigg\}%
^{1/p} ,  \notag \\
|f|_{H_{p}^{{\scriptsize \beta }}(E_{a,b},V)}= \Bigg\{\int_{a}^{b}|
f(t,\cdot )|_{H_{p}^{{\scriptsize \beta }}(\mathbf{R}^{d},V)}^{p}dt\Bigg\}%
^{1/p}  \label{norm11}
\end{eqnarray}
and 
\begin{equation}
|f|_{\tilde{H}_{p}^{{\scriptsize \beta }}(E_{a,b},V)}= \Bigg\{\int_{a}^{b}|
f(t,\cdot )|_{\tilde H_{p}^{{\scriptsize \beta }}(\mathbf{R}^{d},V)}^{p}dt%
\Bigg\}^{1/p}.  \label{norm2}
\end{equation}

The spaces $B_{pp}^{{\scriptsize \beta }}(\tilde E_{a,b},V)$, $H_{p}^{%
{\scriptsize \beta }}(\tilde E_{a,b},V)$ and $\tilde{H}_{p}^{{\scriptsize %
\beta }}(\tilde E_{a,b},V)$ consist of all measurable $\mathcal{S}^{\prime}(%
\mathbf{R}^{d},V)$-valued functions on $\{(s,t): a\leqslant s\leqslant
t\leqslant b\}$ with finite corresponding norms:%
\begin{eqnarray}
|f|_{B_{pp}^{{\scriptsize \beta }}(\tilde E_{a,b},V)}= \Bigg\{%
\int_{a}^{b}\int_{a}^{t}| f(s,t,\cdot )|_{B_{pp}^{{\scriptsize \beta }}(%
\mathbf{R}^{d},V)}^{p}dsdt\Bigg\}^{1/p} ,  \notag \\
|f|_{H_{p}^{{\scriptsize \beta }}(\tilde E_{a,b},V)}= \Bigg\{%
\int_{a}^{b}\int_{a}^{t}| f(s,t,\cdot )|_{H_{p}^{{\scriptsize \beta }}(%
\mathbf{R}^{d},V)}^{p}dsdt\Bigg\}^{1/p}  \label{norm11n}
\end{eqnarray}
and 
\begin{equation}
|f|_{\tilde{H}_{p}^{{\scriptsize \beta }}(\tilde E_{a,b},V)}= \Bigg\{%
\int_{a}^{b}\int_{a}^{t}| f(s,t,\cdot )|_{\tilde H_{p}^{{\scriptsize \beta }%
}(\mathbf{R}^{d},V)}^{p}dsdt\Bigg\}^{1/p}.  \label{norm2n}
\end{equation}

For the scalar functions the norms (\ref{nf5}) and (\ref{nf0}) are
equivalent (see \cite{Tri92}, p.~15). Therefore, the norms (\ref{norm11})
and (\ref{norm2}) as well as (\ref{norm11n}) and (\ref{norm2n}) are
equivalent.

If $V$ is a separable Hilbert space, we will also use the spaces $\bar
B_{pp}^{{\scriptsize \beta}}(\tilde E_{a,b},V)$ and $\bar H_{p}^{%
{\scriptsize \beta}}(\tilde E_{a,b},V)$ consisting of measurable $S^{\prime
}(\mathbf{R}^d,V)$-valued functions on $\{(s,t):a\leq s\leq t\leq b\}$ with
finite norms 
\begin{equation*}
\vert f\vert_{\bar B_{pp}^{{\scriptsize \beta}}(\tilde E_{a,b},V)} = %
\biggl\{ \sum_{j=0}^{\infty}2^{j{\scriptsize \beta p}}\int_a^b\int_{\mathbf{R%
}^d} \biggl( \int_a^t\big\vert \varphi_j\ast f(s,t,x)\big\vert^2_V ds \biggr)%
^{p/2}dxdt \biggr\}^{1/p}
\end{equation*}
and 
\begin{equation*}
\vert f\vert_{\bar H_{p}^{{\scriptsize \beta}}(\tilde E_{a,b},V)} = \biggl\{ %
\int_a^b\int_{\mathbf{R}^d} \biggl( \int_a^t\big\vert \mathcal{F}^{-1}\bigl( %
(1+\vert \xi\vert^2 )^{{\scriptsize \beta/2}}\mathcal{F}f\bigr)(s,t,x) %
\big\vert^2_V ds \biggr)^{p/2}dxdt \biggr\}^{1/p} .
\end{equation*}

\subsection{Main results}

Throughout the paper we assume that the functions $b=b(t),B=B(t)$ and $m^{(%
{\scriptsize \alpha )}}(t,y)\geq 0$ are measurable, $m^{(2)}=0$ and%
\begin{equation*}
\int_{S^{d-1}}wm^{(1)}(t,w)dw=0,t\in \mathbf{R}.
\end{equation*}

Also, we will need the following assumptions.

\ \textbf{A}. (i) The function $m=m(t,y)\geq 0$ is 0-homogeneous and
differentiable in $y$ up to $d_{0}=\left[ \frac{d}{2}\right] +1;$

(ii) There is a constant $K$ such that for each $\alpha \in (0,2)$ and $t\in 
\mathbf{R}$%
\begin{equation*}
|b(t)|+|B(t)|+\sup_{\substack{ |\gamma |\leq d_{0},  \\ |\xi |=1}}|\partial
_{y}^{{\scriptsize \gamma }}m^{({\scriptsize \alpha )}}(t,y)|\leq K.
\end{equation*}

\textbf{B.} There is a constant $\mu >0$ such that%
\begin{equation*}
\sup_{t,|\xi |=1}\mathop{\mathrm{Re}}\psi ^{({\scriptsize \alpha )}}(t,\xi
)\leq -\mu .
\end{equation*}

\begin{remark}
\label{r1}The assumption \textbf{B} holds with certain $\mu >0$ if, for
example,%
\begin{eqnarray*}
\inf_{t,|\xi |=1}(B(t)\xi ,\xi ) &>&0\text{, }\alpha =2, \\
\inf_{t,w\in \Gamma }m^{({\scriptsize \alpha )}}(t,w) &>&0,\alpha \in (0,2),
\end{eqnarray*}%
for a measurable subset $\Gamma \subseteq S^{d-1}$ of a positive Lebesgue
measure.
\end{remark}

Given a measurable $\mathcal{S}^{\prime }(\mathbf{R}^{d},V)$-valued function 
$g$ on $\mathbf{R}$, we consider a linear operator $\mathcal{I}$ that
assigns to it a $\mathcal{S}^{\prime }(\mathbf{R}^{d},V)$-valued function on 
$\{(s,t):s\leq t\}:$%
\begin{equation*}
\mathcal{I}g(s,t,x)=G_{s,t}\ast g(s,x),s\leq t,x\in \mathbf{R}^{d}.
\end{equation*}

The main results of the paper are the two propositions given below.
Proposition \ref{main1} in the case $V=L_p(U,\mathcal{U},\Pi)$ is related to
the integral $I_2$ in (\ref{nf11}) and Proposition \ref{main2} in the case $%
V=L_2(U,\mathcal{U},\Pi)$ is related to the integral $I_1 $ in (\ref{nf1}).

%%%%%%%%%%%%%%%%%%%%%%%%%%%%%%%%%%%%%%

\begin{proposition}
\label{main1}Let Assumptions $\mathbf{A}\mbox{ and }{B}$ hold, $p\geq
2,\beta \in \mathbf{R,}-\infty \leq a<b\leq \infty $. Then the operator $%
\mathcal{I}:B_{pp}^{{\scriptsize \beta +\alpha -\frac{\alpha }{p}}%
}(E_{a,b},V)\rightarrow \tilde{H}_{p}^{{\scriptsize \beta +\alpha }}(\tilde{E%
}_{a,b},V)$ is bounded: there is a constant $C=C(\alpha ,K,\mu ,p,d)$ such
that 
\begin{equation}
|\mathcal{I}g|_{\tilde{H}_{p}^{{\scriptsize \beta +\alpha }}(\tilde{E}%
_{a,b},V)}\leq C|g|_{B_{pp}^{{\scriptsize \beta +\alpha -\frac{\alpha }{p}}%
}(E_{a,b},V)},g\in B_{pp}^{{\scriptsize \beta +\alpha -\frac{\alpha }{p}}%
}(E_{a,b},V).  \label{nf2}
\end{equation}
\end{proposition}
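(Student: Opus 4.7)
The plan is to prove the ``trace-type'' estimate
\[
\int_s^b |\mathcal{I}g(s,t,\cdot)|^p_{\tilde H^{\beta+\alpha}_p(\mathbf{R}^d,V)}\,dt \;\leq\; C\,|g(s,\cdot)|^p_{B^{\beta+\alpha-\alpha/p}_{pp}(\mathbf{R}^d,V)}
\]
for each fixed $s\in[a,b]$, and then integrate over $s$ to obtain (\ref{nf2}). This exhibits $B^{\beta+\alpha-\alpha/p}_{pp}$ as the natural trace space at $t=s$ of the ``parabolic Sobolev'' space $L_p((s,b);\tilde H^{\beta+\alpha}_p(V))$ associated with the propagator $G_{s,t}$, in the spirit of classical maximal regularity results for parabolic equations.

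\textbf{Key steps.} Decompose $g=\sum_{k\geq 0}g_k$ with $g_k=\varphi_k\ast g$ and set $v_k(s,t,\cdot):=G_{s,t}\ast g_k(s,\cdot)$; each $v_k$ has spatial Fourier support in $\{|\xi|\sim 2^k\}$. The main analytic input is the kernel estimate
\[
|\varphi_k\ast G_{s,t}|_{L_1(\mathbf{R}^d)} \;\leq\; C\,e^{-c(t-s)\,2^{k\alpha}},\qquad k\geq 1
\]
(with a uniform constant bound in the case $k=0$). Its proof rests on the Fourier-side inequality $|\mathcal{F}G_{s,t}(\xi)|=|\exp(\int_s^t\psi^{(\alpha)}(r,\xi)\,dr)|\leq e^{-\mu(t-s)|\xi|^\alpha}$, which follows from Assumption \textbf{B} combined with the $\alpha$-homogeneity of $\mathrm{Re}\,\psi^{(\alpha)}$ in $\xi$, together with the $d_0$-order derivative bounds on the symbol supplied by Assumption \textbf{A}: after the rescaling $\xi=2^k\eta$, the $L_1$-norm of the rescaled dyadic kernel is controlled via Plancherel's theorem and the embedding $H^{d_0}_2(\mathbf{R}^d)\hookrightarrow L_1(\mathbf{R}^d)$ (valid since $d_0=[d/2]+1$). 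By Young's inequality, $|v_k(s,t,\cdot)|_{V,p}\leq C\,e^{-c(t-s)2^{k\alpha}}\,|g_k(s,\cdot)|_{V,p}$, and integrating in $t$ yields
\[
\int_s^b |v_k(s,t,\cdot)|^p_{V,p}\,dt \;\leq\; C\,2^{-k\alpha}\,|g_k(s,\cdot)|^p_{V,p},
\]
the ``per-dyadic-scale'' budget needed to assemble $\sum_k 2^{kp(\beta+\alpha-\alpha/p)}|g_k(s,\cdot)|^p_{V,p}$ after weighting by $2^{kp(\beta+\alpha)}$.

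\textbf{Main obstacle.} The hard step is converting these per-scale budgets into the full $\tilde H^{\beta+\alpha}_p$-estimate. Since the $\tilde H^{\beta+\alpha}_p$-norm is a Littlewood--Paley $\ell^2$ square function with weights $2^{(\beta+\alpha)k}$, while the Besov norm $B^{\beta+\alpha-\alpha/p}_{pp}$ is an $\ell^p$ sum, and for $p>2$ the Besov space is strictly larger than the corresponding Triebel--Lizorkin space, a naive application of Minkowski's inequality yields only a bound involving the (strictly stronger) Triebel--Lizorkin norm on the right, not the Besov norm. The sharp conversion requires exploiting the essentially disjoint temporal supports of the pieces $v_k$ --- the $k$-th piece being concentrated on time scales $t-s\lesssim 2^{-k\alpha}$ by the exponential kernel decay --- via a careful dyadic slicing of the time integral, $\int_s^b\,dt=\sum_m\int_{I_m}\,dt$ with $I_m\sim[s+2^{-(m+1)\alpha},\,s+2^{-m\alpha}]$, on which essentially only the modes $k\lesssim m$ are active. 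Balancing the spatial weights $2^{(\beta+\alpha)k}$ against the temporal scales $2^{-m\alpha}$, and using a Fefferman--Stein-type maximal function bound on the vector-valued $\ell^2$ square function, then converts the $\ell^2$ structure into the desired $\ell^p$ one with the $\alpha(1-1/p)$ gain in regularity. For $p=2$ the two norm structures coincide and the argument reduces to Plancherel; for $p>2$ this delicate dyadic balancing is the essence of the approach (communicated by Krylov) alluded to in the Introduction.
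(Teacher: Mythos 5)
Your scaffolding coincides with the paper's proof: the Littlewood--Paley decomposition $g_k=\varphi_k\ast g$, the dyadic kernel bound $|\varphi_k\ast G_{s,t}|_{L_1}\leq Ce^{-c2^{k\alpha}(t-s)}$ (which the paper imports as (\ref{ff1}) from \cite{mikprag1}, \cite{MiP09}), Young's inequality, and the per-scale budget $\int_s^b|v_k(s,t,\cdot)|_{V,p}^p\,dt\leq C2^{-k\alpha}|g_k(s,\cdot)|_{V,p}^p$ are all exactly the ingredients used there, and you have correctly located the one nontrivial step. But your proposal does not actually carry out that step, and the tool you name for it --- a Fefferman--Stein vector-valued maximal inequality --- is the wrong one and would fail. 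Fefferman--Stein applied to the square function yields $\bigl\|(\sum_k2^{2(\beta+\alpha)k}|h^k_{s,t}\ast g_k|^2)^{1/2}\bigr\|_{V,p}\leq C\bigl\|(\sum_k2^{2(\beta+\alpha)k}|g_k|^2)^{1/2}\bigr\|_{V,p}$, i.e. precisely the Triebel--Lizorkin-type bound you yourself flag as insufficient; it discards the exponential decay in $t-s$ and therefore can produce neither the $\ell^p$ (Besov) structure on the right nor the $\alpha/p$ gain. Your worry about Minkowski is also misplaced: for $p\geq2$ Minkowski's inequality $\|(\sum_k|f_k|^2)^{1/2}\|_{V,p}\leq(\sum_k\|f_k\|_{V,p}^2)^{1/2}$ does not leave a Triebel--Lizorkin norm but reduces everything to the scalar sequence $a_k(s)=2^{2(\beta+\alpha)k}|g_k(s,\cdot)|_{V,p}^2$, after which there is nothing left for a maximal function to act on. (A secondary quibble: $H^{d_0}_2(\mathbf{R}^d)\hookrightarrow L_1(\mathbf{R}^d)$ is false as stated; the intended argument is Cauchy--Schwarz against $(1+|x|^2)^{-d_0}$, which is what makes $d_0=[d/2]+1$ the right order.)

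The paper closes the gap elementarily. After Minkowski and Young one has
\begin{equation*}
|\mathcal{I}g|^p_{\tilde H^{{\scriptsize\beta}}_p(\tilde E_{a,b},V)}\leq C\int_a^b\int_s^b\Bigl(\sum_{k\geq0}e^{-c2^{{\scriptsize\alpha} k}(t-s)}2^{2{\scriptsize\beta} k}|g_k(s,\cdot)|^2_{V,p}\Bigr)^{p/2}dt\,ds ,
\end{equation*}
and the $k$-sum is split at $2^{\alpha k}(t-s)\leq1$ (your ``active modes''). On that part one fixes $\kappa\in(0,\tfrac{2\alpha}{p})$, writes the summand as $2^{\kappa k}\cdot2^{-\kappa k}2^{2\beta k}|g_k|^2_{V,p}$ and applies H\"older with exponents $\tfrac{p}{p-2}$ and $\tfrac p2$; the first factor sums to $C(t-s)^{-q\kappa/\alpha}$, and then $\int_s^{s+2^{-\alpha k}}(t-s)^{-p\kappa/2\alpha}dt\leq C2^{-\alpha k}2^{p\kappa k/2}$ cancels the $2^{-p\kappa k/2}$ and delivers exactly the $2^{-\alpha k}$ budget. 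On the complementary part $\sum_{2^{\alpha k}(t-s)>1}e^{-c2^{\alpha k}(t-s)}\leq C$ supplies the H\"older weight for free and $\int_s^be^{-c2^{\alpha k}(t-s)}dt\leq C2^{-\alpha k}$ again. Both pieces give $C\int_a^b\sum_k2^{-\alpha k}2^{p\beta k}|g_k(s,\cdot)|^p_{V,p}\,ds=C|g|^p_{B^{{\scriptsize\beta-\alpha/p}}_{pp}(E_{a,b},V)}$. So the decisive device is H\"older's inequality on the dyadic sum combined with the time integration --- no vector-valued maximal function theory is involved. Until you replace the Fefferman--Stein step with an argument of this kind, the proof is incomplete at its one essential point.
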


Since for the scalar functions the norms (\ref{norm11n}) and (\ref{norm2n})
are equivalent, we have the following statement.

\begin{corollary}
\label{corn0}Let $V=L_{p}(U,\mathcal{U},\Pi )$. Then Proposition~\ref{main1}
holds with $\tilde{H}_{p}^{{\scriptsize \beta +\alpha }}(\tilde{E}_{a,b},V)$
replaced by $H_{p}^{{\scriptsize \beta +\alpha }}(\tilde{E}_{a,b},V)$.
\end{corollary}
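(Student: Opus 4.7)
The plan is to reduce the corollary to Proposition \ref{main1} by proving the single inequality
$$|h|_{H_p^{\beta+\alpha}(\tilde{E}_{a,b},V)} \leq C |h|_{\tilde{H}_p^{\beta+\alpha}(\tilde{E}_{a,b},V)} \quad \text{for all } h,$$
when $V=L_p(U,\mathcal{U},\Pi)$ (which I then apply to $h=\mathcal{I}g$). The key observation is that for this particular $V$, Fubini turns the $V$-valued $L^p$-norm into a scalar $L^p$-norm on $\mathbf{R}^d\times U$, so the scalar equivalence of \eqref{nf5} and \eqref{nf0} referenced in Subsection \ref{test} can be used pointwise in $v\in U$.

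First, for a fixed pair $(s,t)$ with $a\leq s\leq t\leq b$, write $f(x,v)=h(s,t,x,v)$. By Fubini,
$$|f|_{H_p^{\beta}(\mathbf{R}^d,V)}^p = \int_{U}|f(\cdot,v)|_{H_p^{\beta}(\mathbf{R}^d)}^p\,\Pi(dv),$$
and then, applying the scalar equivalence of the Bessel-potential norm and the Littlewood--Paley norm pointwise in $v$,
$$|f|_{H_p^{\beta}(\mathbf{R}^d,V)}^p \asymp \int_U\int_{\mathbf{R}^d}\Big(\sum_{j\geq 0}2^{2\beta j}|\varphi_j\ast f(x,v)|^2\Big)^{p/2}dx\,\Pi(dv).$$
A second use of Fubini exchanges the order of $x$- and $v$-integration.

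Now apply Minkowski's integral inequality, which is valid since $p/2\geq 1$ (recall the assumption $p\geq 2$ in Proposition \ref{main1}): for each fixed $x$, with $a_j(v):=2^{\beta j}\varphi_j\ast f(x,v)$,
$$\left(\int_U\Big(\sum_{j\geq 0}|a_j(v)|^2\Big)^{p/2}\Pi(dv)\right)^{2/p} \leq \sum_{j\geq 0}\Big(\int_U|a_j(v)|^p\Pi(dv)\Big)^{2/p} = \sum_{j\geq 0}2^{2\beta j}|\varphi_j\ast f(x)|_V^2.$$
Raising to the $p/2$ power and integrating in $x$ gives
$$|f|_{H_p^{\beta}(\mathbf{R}^d,V)}^p \leq C\int_{\mathbf{R}^d}\Big(\sum_{j\geq 0}2^{2\beta j}|\varphi_j\ast f(x)|_V^2\Big)^{p/2}dx = C|f|_{\tilde{H}_p^{\beta}(\mathbf{R}^d,V)}^p.$$
Integrating this pointwise-in-$(s,t)$ inequality over $\{(s,t):a\leq s\leq t\leq b\}$ yields the displayed embedding on $\tilde{E}_{a,b}$, and composing with \eqref{nf2} of Proposition \ref{main1} finishes the corollary.

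There is no substantive obstacle; the whole argument is a tensorization of the known scalar equivalence via Fubini, with the Minkowski step using $p\geq 2$ going in exactly the right direction. The only thing to double-check is that the Littlewood--Paley characterization quoted from Triebel applies to every $f(\cdot,v)$ for almost every $v$, which follows because $f$ is a measurable $\mathcal{S}'(\mathbf{R}^d,V)$-valued function and the convolutions $\varphi_j\ast f$ are taken in the space variable only.
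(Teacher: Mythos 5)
Your proposal is correct and follows essentially the same route as the paper's proof: Fubini to reduce the $L_p(U,\Pi)$-valued Bessel-potential norm to a scalar one, the scalar equivalence of the norms \eqref{nf5} and \eqref{nf0} applied pointwise in $v$, and then Minkowski's integral inequality (valid since $p/2\geq 1$) to pass from the $L^p_v\ell^2_j$ quantity to the $\ell^2_j L^p_v$ quantity appearing in the $\tilde{H}_p$-norm, after which Proposition~\ref{main1} applies. The only cosmetic difference is that you state the comparison as a general embedding for arbitrary $h$ before specializing to $h=\mathcal{I}g$, whereas the paper argues directly with $\mathcal{I}g$.
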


\begin{proof}
Let $V=L_{p}(U,\mathcal{U},\Pi ).$ If $\mathcal{I}g\in H_{p}^{\beta +\alpha
}(\tilde{E}_{a,b},V)$, then $\Pi $ -a.e. $\mathcal{I}g(\cdot ,\cdot ,v)\in
H_{p}^{{\scriptsize \beta +\alpha }}(\tilde{E}_{a,b},\mathbf{R}).$ Since the
norms (\ref{norm11}) and (\ref{norm2}) are equivalent for the scalar
functions, we have%
\begin{eqnarray*}
|\mathcal{I}g|_{H_{p}^{\beta +\alpha }(\tilde{E}_{a,b},V)}^{p}
&=&\int_{a}^{b}\int_{\mathbf{R}^{d}}\int_{U}|(I-\Delta )^{(\beta +\alpha )/2}%
\mathcal{I}g(t,x,\upsilon )|^{p}\Pi (d\upsilon )dxdt \\
&\leq &C\int_{a}^{b}\int_{U}\int_{\mathbf{R}^{d}}\left( \sum_{j=0}^{\infty
}2^{2(\beta +\alpha )j}|\varphi _{j}\ast \mathcal{I}g(t,x,\upsilon
)|^{2}\right) ^{p/2}dx\Pi (d\upsilon )dt,
\end{eqnarray*}%
and by Minkowski inequality%
\begin{eqnarray*}
&&\int_{a}^{b}\int_{U}\int_{\mathbf{R}^{d}}\left( \sum_{j=0}^{\infty
}2^{2(\beta +\alpha )j}|\varphi _{j}\ast \mathcal{I}g(t,x,\upsilon
)|^{2}\right) ^{p/2}dx\Pi (d\upsilon )dt \\
&\leq &C\int_{a}^{b}\int_{\mathbf{R}^{d}}\left( \sum_{j=0}^{\infty
}2^{2(\beta +\alpha )j}|\varphi _{j}\ast \mathcal{I}g(t,x,\cdot
)|_{V}^{2}\right) ^{p/2}dxdt \\
&=&C|\mathcal{I}g|_{\tilde{H}_{p}^{\beta +\alpha }(\tilde{E}_{a,b},V)}^{p}
\end{eqnarray*}%
and the statement follows by Proposition \ref{main1}.
\end{proof}

\begin{proposition}
\label{main2}Let Assumptions $\mathbf{A}$ (with $d_{0}$ replaced by $d_{0}+1$%
) and $\mathbf{B}$ hold, $p\geq 2,\beta \in \mathbf{R},-\infty \leq a<b\leq
\infty $, and let $V$ be a separable Hilbert space.

Then there is a constant $C=C(\alpha ,K,\mu ,p,d)$ such that%
\begin{equation*}
|\mathcal{\partial }^{{\scriptsize \alpha /2}}\mathcal{I}g|_{\bar H_{p}^{%
{\scriptsize \beta }}(\tilde{E}_{a,b},V)}\leq C|g|_{H_{p}^{{\scriptsize %
\beta }}(E_{a,b},V)},\quad g\in H^{{\scriptsize \beta}}_p(E_{a,b},V)
\end{equation*}
and%
\begin{equation*}
|\partial ^{{\scriptsize \alpha /2}}\mathcal{I}g|_{\bar B_{pp}^{{\scriptsize %
\beta }}(\tilde{E}_{a,b},V)}\leq C|g|_{B_{pp}^{{\scriptsize \beta }%
}(E_{a,b},V)},\quad g\in B^{{\scriptsize \beta}}_{pp}(E_{a,b},V).
\end{equation*}
\end{proposition}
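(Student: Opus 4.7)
The plan is to realize $\partial^{\alpha/2}\mathcal{I}$ as a scalar Calder\'{o}n-Zygmund operator with target space $L^{2}(ds)$, obtain an $L^{2}$-bound via Plancherel, extend to $L^{p}$ for $p\geq 2$ by the vector-valued Calder\'{o}n-Zygmund theorem, and finally deduce the Besov version by Littlewood-Paley decomposition.

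First I would reduce to the case $\beta=0$ and scalar $V=\mathbf{R}$. The multiplier $(I-\Delta)^{\beta/2}$ commutes with the convolution operator $g\mapsto G_{s,t}\ast g$, so replacing $g$ by $(I-\Delta)^{\beta/2}g$ reduces to $\beta=0$. Since $V$ is a separable Hilbert space and $G_{s,t}$ is scalar, the standard Hilbert-valued extension of scalar Calder\'{o}n-Zygmund operators further reduces the problem to $V=\mathbf{R}$. The core estimate to prove becomes
\begin{equation*}
\int_{a}^{b}\int_{\mathbf{R}^{d}}\Big(\int_{a}^{t}|\partial^{\alpha/2}G_{s,t}\ast g(s,\cdot)(x)|^{2}ds\Big)^{p/2}dx\,dt\leq C\int_{a}^{b}\int_{\mathbf{R}^{d}}|g(s,x)|^{p}dx\,ds.
\end{equation*}

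From $\mathcal{F}G_{s,t}(\xi)=\exp\int_{s}^{t}\psi^{(\alpha)}(r,\xi)dr$ and Assumption $\mathbf{B}$, $|\mathcal{F}G_{s,t}(\xi)|\leq e^{-\mu(t-s)|\xi|^{\alpha}}$. Combined with Assumption $\mathbf{A}$ strengthened to $d_{0}+1$ derivatives, integration by parts in $\xi$ yields pointwise bounds of the form
\begin{equation*}
|\partial_{x}^{\gamma}\partial^{\alpha/2}G_{s,t}(x)|\leq C\,(t-s)^{-(d+\alpha/2+|\gamma|)/\alpha}\bigl(1+|x|(t-s)^{-1/\alpha}\bigr)^{-(d+d_{0}+1)}
\end{equation*}
for $|\gamma|\leq 1$. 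The case $p=2$ then follows immediately from Plancherel and Fubini, using $\int_{s}^{\infty}|\xi|^{\alpha}e^{-2\mu(t-s)|\xi|^{\alpha}}dt\leq(2\mu)^{-1}$. For $p>2$, I would view
\begin{equation*}
T:g\longmapsto \Bigl((t,x)\mapsto\bigl(s\mapsto\mathbf{1}_{s\leq t}\,\partial^{\alpha/2}G_{s,t}\ast g(s,\cdot)(x)\bigr)\Bigr)
\end{equation*}
as an operator from $L^{p}(\mathbf{R}^{d+1})$ into $L^{p}(\mathbf{R}^{d+1},L^{2}([a,b],ds))$. Equipping $\mathbf{R}^{d+1}$ with the parabolic quasi-metric $\rho((t,x),(t^{\prime},x^{\prime}))=|t-t^{\prime}|^{1/\alpha}+|x-x^{\prime}|$, the kernel bounds above yield the H\"{o}rmander condition for the $L^{2}(ds)$-valued kernel, and the Benedek--Calder\'{o}n--Panzone theorem upgrades the $L^{2}$ bound to all $p\in\lbrack2,\infty)$. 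The Besov statement then follows by applying this $\bar{H}_{p}^{\beta}$-bound to each Littlewood-Paley block $\varphi_{j}\ast g$, noting that $\varphi_{j}\ast$ commutes with both $G_{s,t}\ast$ and $\partial^{\alpha/2}$.

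The main obstacle is the verification of the H\"{o}rmander condition on the $L^{2}(ds)$-valued kernel. Its distributional character in $s$ (essentially a delta function concentrated on the diagonal $\sigma=s$) forces the smoothness estimate to be carried out directly on the $L^{2}$-valued singular part rather than deduced from a pointwise identity, and the parabolic scaling $(t-s)^{1/\alpha}$ has to be balanced carefully against the singularity $|\xi|^{\alpha/2}$. The strengthening of Assumption $\mathbf{A}$ from $d_{0}$ to $d_{0}+1$ enters precisely here, providing the one extra derivative of decay on $\partial_{x}G_{s,t}$ needed for the $L^{2}(ds)$-valued smoothness of the kernel to be integrable away from the diagonal.
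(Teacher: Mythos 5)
Your reduction to $\beta=0$, the $p=2$ case via Plancherel, and the derivation of the Besov estimate by applying the $\bar H_p^0$ bound to each Littlewood--Paley block all coincide with what the paper does. The divergence, and the gap, is in the central step $p>2$. You propose to treat $g\mapsto\bigl(s\mapsto 1_{s\le t}\partial^{\alpha/2}G_{s,t}\ast g(s,\cdot)(x)\bigr)$ as an $L^2(ds)$-valued Calder\'on--Zygmund operator and invoke Benedek--Calder\'on--Panzone after checking a H\"ormander condition. But the kernel of this operator at $((t,x),(\sigma,y))$ is $s\mapsto\delta(s-\sigma)\,1_{\sigma\le t}\,\partial^{\alpha/2}G_{\sigma,t}(x-y)$: it is a Dirac mass in the time variable, hence not an element of $L^2(ds)$ at any point off the diagonal, and neither the size nor the H\"ormander smoothness condition of the vector-valued theory can even be formulated for it. You flag exactly this as ``the main obstacle'' and say the smoothness estimate must be ``carried out directly on the $L^2$-valued singular part,'' but that sentence is the entire difficulty of the proposition, not a step of a proof. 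A secondary problem: the pointwise bound $|\partial_x^\gamma\partial^{\alpha/2}G_{s,t}(x)|\le C(t-s)^{-(d+\alpha/2+|\gamma|)/\alpha}(1+|x|(t-s)^{-1/\alpha})^{-(d+d_0+1)}$ would require roughly $d+d_0+1$ derivatives of the symbol in $\xi$, whereas Assumption $\mathbf{A}$ (even strengthened) only gives $d_0+1=[d/2]+2$ derivatives of $m^{(\alpha)}$; no such pointwise decay is available, and the paper never uses one.

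The paper's route avoids both issues by never forming an $H$-valued kernel. It works with the scalar square function $Gg(t,x)=\bigl(\int_{-\infty}^{t}|\partial^{\alpha/2}G_{u,t}\ast g(u,\cdot)(x)|_V^2\,du\bigr)^{1/2}$, exploits the subadditivity $|G(g_1+g_2)-Gg_1|\le Gg_2$ to control mean oscillation, and proves the pointwise sharp-function estimate $(Gg)^{\#}(t,x)\le C\bigl(\mathcal{M}_t\mathcal{M}_x|g|_V^2(t,x)\bigr)^{1/2}$ by parabolic rescaling to a unit box and a three-case decomposition of the support of $g$ (near in space--time, far in space, far in time); the conclusion then follows from the Fefferman--Stein and Hardy--Littlewood inequalities, i.e.\ (\ref{eq19}) and (\ref{eq20}). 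The limited smoothness of the symbol is handled by Lemma \ref{auxl2} (fractional derivative bounds on the multipliers) combined with Parseval, which yield only weighted $L^2$ bounds on the kernels $F^{j,k}_{u,s}$, and by Lemma \ref{r2}, an integration-by-parts-in-polar-coordinates device that converts such averaged bounds into pointwise estimates by maximal functions. If you want to salvage a Calder\'on--Zygmund-style argument you would need a formulation adapted to square functions (which is, in effect, what the sharp-function method is); as written, the BCP theorem does not apply to this operator.
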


\section{Moment estimates of discontinuous martingales}

The following two-sided moment estimate for discontinuous martingales should
be well known (see e.g. \cite{PrT97} for this type of estimate from above).
For the sake of completeness we provide its proof. Let $p(dt,d\upsilon )$ be
a $\sigma $-finite point measure on $([0,\infty )\times U,\mathcal{B}%
([0,\infty ))\otimes \mathcal{U})$ with a dual predictable projection
measure $\pi (dt,d\upsilon )$ such that $\pi \left( \{t\}\times U\right)
=0,t\geq 0$, and let $\mathcal{R}(\mathbb{F})$ be the progressive $\sigma $%
-algebra on $[0,\infty )\times \Omega $ (see \cite{Jac79}). Denote by $%
L_{loc}^{2}$ the space of all $\mathcal{R}(\mathbb{F})\otimes \mathcal{U}$%
-measurable functions $g(t,\upsilon )=g(\omega ,t,\upsilon )$ such that $%
\mathbf{P}$-a.s. 
\begin{equation*}
\int_{0}^{t}\int_{U}g(s,\upsilon )^{2}\pi (ds,d\upsilon )<\infty
\end{equation*}%
for all $t.$

\begin{lemma}
\label{hl0}Let $p\geq 2,g\in L_{loc}^{2}$ and%
\begin{equation*}
Q_{t}=\int_{0}^{t}\int_{U}g(s,\upsilon )q(ds,d\upsilon ),t\geq 0.
\end{equation*}%
Then there are constants $C=C(p)$ and $c=c(p)>0$ such that for any $\mathbb{F%
}$-stopping time $\tau \leq T$ 
\begin{eqnarray}
&&c\mathbf{E}\bigg[\int_{0}^{{\scriptsize \tau }}\int_{U}|g(s,\upsilon
)|^{p}\pi (d\upsilon ,ds)+\bigg(\int_{0}^{{\scriptsize \tau }%
}\int_{U}g(s,\upsilon )^{2}\pi (d\upsilon ,ds)\bigg)^{p/2}\bigg]  \notag \\
&&\quad \leq \mathbf{E}\big[\sup_{t\leq {\scriptsize \tau }}|Q_{t}|^{p}\big]
\\
&&\quad \leq C\mathbf{E}\bigg[\int_{0}^{{\scriptsize \tau }%
}\int_{U}|g(s,\upsilon )| ^{p}\pi(d\upsilon ,ds)+\bigg(\int_{0}^{%
{\scriptsize \tau }}\int_{U}g(s,\upsilon )^{2}\pi (d\upsilon ,ds)\bigg)^{p/2}%
\bigg]  \notag  \label{eight}
\end{eqnarray}
\end{lemma}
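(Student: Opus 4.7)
The approach is to reduce the two-sided estimate to the classical Burkholder--Davis--Gundy (BDG) inequality for purely discontinuous martingales. For $p\geq 2$, BDG yields $c_{p}\mathbf{E}[Q,Q]_{\tau }^{p/2}\leq \mathbf{E}\sup_{t\leq \tau }|Q_{t}|^{p}\leq C_{p}\mathbf{E}[Q,Q]_{\tau }^{p/2}$, where the quadratic variation is the atomic jump sum $[Q,Q]_{\tau }=\sum_{s\leq \tau }(\Delta Q_{s})^{2}=\int_{0}^{\tau }\!\int_{U}g(s,\upsilon )^{2}p(ds,d\upsilon )$. It therefore suffices to establish the equivalence
\begin{equation*}
\mathbf{E}[Q,Q]_{\tau }^{p/2}\;\asymp \;\mathbf{E}\!\int_{0}^{\tau }\!\!\int_{U}|g|^{p}\,d\pi \;+\;\mathbf{E}\bigg(\int_{0}^{\tau }\!\!\int_{U}g^{2}\,d\pi \bigg)^{p/2},
\end{equation*}
with constants depending only on $p$; combining this with the BDG sandwich then gives both halves of the claim.

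For the ``$\leq $'' direction of the equivalence, I would use the compensation identity $\int g^{2}\,dp=\int g^{2}\,d\pi +\int g^{2}\,dq$ together with $(a+|b|)^{p/2}\leq 2^{p/2-1}(a^{p/2}+|b|^{p/2})$, $a\geq 0$. The $\int g^{2}\,d\pi $ piece is already in the desired form; what remains is the $L^{p/2}$-moment of the purely discontinuous martingale $N_{t}=\int_{0}^{t}\!\int g^{2}\,dq$. Applying BDG a second time gives $\mathbf{E}|N_{\tau }|^{p/2}\leq C\mathbf{E}(\int g^{4}\,dp)^{p/4}$; the pointwise bound $\int g^{4}\,dp\leq \sup_{s\leq \tau }|\Delta Q_{s}|^{2}\cdot \lbrack Q,Q]_{\tau }$, Cauchy--Schwarz, the trivial atomic inequality $\sup_{s}|\Delta Q_{s}|^{p}\leq \int |g|^{p}\,dp$ (compensated into $\int |g|^{p}\,d\pi $), and Young's inequality then let the residual $\mathbf{E}[Q,Q]_{\tau }^{p/2}$ be absorbed on the left. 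The ``$\geq $'' direction proceeds in parallel: the superadditive inequality $(\sum x_{i})^{p/2}\geq \sum x_{i}^{p/2}$ (valid for $p/2\geq 1$, $x_{i}\geq 0$) applied atomwise yields $[Q,Q]_{\tau }^{p/2}\geq \int |g|^{p}\,dp$, so $\mathbf{E}\int |g|^{p}\,d\pi \leq \mathbf{E}[Q,Q]_{\tau }^{p/2}$; the $(\int g^{2}\,d\pi )^{p/2}$ term is extracted by inverting the compensation as $\int g^{2}\,d\pi =[Q,Q]_{\tau }-\int g^{2}\,dq$ and running the same triangle-plus-BDG-absorption argument.

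The chief technical nuisance is preserving integrability through the second application of BDG, which requires $\int g^{4}\,dp$ to have a finite $L^{p/4}$-moment --- not automatic from $g\in L_{loc}^{2}$ alone. I would handle this by a standard localization: choose stopping times $\tau _{n}\nearrow \tau $ along which $g$ is bounded and supported on a $\pi $-finite subset of $U$, derive the inequalities at the localized level with constants depending only on $p$, and pass to the limit using monotone convergence on the right-hand side and Fatou on the left (all relevant integrands are non-negative). A secondary check is that the constants from the two BDG invocations and the Young step all depend only on $p$, so the final $c(p),C(p)$ are uniform in $g$ and $\tau $.
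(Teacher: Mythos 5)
Your proof is correct, but it takes a genuinely different route from the paper's. Both arguments open with the same BDG reduction to $\mathbf{E}\,A_{\tau}^{p/2}$, where $A_{\tau}=\int_{0}^{\tau}\int_{U}g^{2}\,dp$ is the quadratic variation. The paper then works entirely at the level of a single pathwise identity: it expands $A_{\tau}^{p/2}=\int_{0}^{\tau}\int_{U}\bigl[(A_{s-}+g^{2})^{p/2}-A_{s-}^{p/2}\bigr]\,dp$, applies the two-sided elementary bound $c\,(b^{p/2}+a^{p/2-1}b)\leq (a+b)^{p/2}-a^{p/2}\leq C\,(b^{p/2}+a^{p/2-1}b)$, and then uses the compensator duality $\mathbf{E}\,L_{\tau}^{p/2}=\tfrac{p}{2}\,\mathbf{E}\int_{0}^{\tau}L_{s}^{p/2-1}\,dA_{s}\leq \tfrac{p}{2}\,\mathbf{E}\bigl[L_{\tau}^{p/2-1}A_{\tau}\bigr]$ together with Young's inequality to close both directions at once; only one invocation of BDG is needed, and the two-sidedness of the final estimate is inherited directly from the two-sidedness of the pointwise inequality. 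You instead compensate $A_{\tau}=L_{\tau}+N_{\tau}$ with $N=\int g^{2}\,dq$ and apply BDG a second time to $N$, closing the loop via the interpolation $[N,N]_{\tau}\leq \sup_{s}|\Delta Q_{s}|^{2}\,A_{\tau}$, Cauchy--Schwarz, the atomic bound $\sup_{s}|\Delta Q_{s}|^{p}\leq \int |g|^{p}\,dp$, and absorption --- the classical Bichteler--Jacod/Novikov iteration. A pleasant feature of your version is that this single interpolation step suffices for every $p\geq 2$ (no repeated iteration in $p$), at the cost of two BDG invocations and two absorption arguments. Note also that the paper's Young-inequality absorptions silently require $\mathbf{E}\,A_{\tau}^{p/2}<\infty$ and $\mathbf{E}\,L_{\tau}^{p/2}<\infty$ just as yours do; your explicit localization addresses a point the paper leaves implicit and is in fact needed in both proofs.
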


\begin{proof}
Let%
\begin{equation*}
A_{t}=\int_{0}^{t}\int_{U}g(s,\upsilon )^{2}p(ds,d\upsilon ),\quad
L_{t}=\int_{0}^{t}\int_{U}g(s,\upsilon )^{2}\pi (d\upsilon ,ds),\quad t\geq
0.
\end{equation*}%
By the Burkholder--Davis--Gundy inequality (see \cite{Jac79}), there are
positive constants $c_{p}$ and $C_{p}$ such that for each $\mathbb{F}$%
-stopping time $\tau $%
\begin{equation*}
c_{p}\mathbf{E}[A_{{\scriptsize \tau }}^{p/2}]\leq \mathbf{E}\big[%
\sup_{t\leq {\scriptsize \tau }}|Q_{t}|^{p}\big]\leq C_{p}\mathbf{E}[A_{%
{\scriptsize \tau }}^{p/2}].
\end{equation*}%
Denoting $q=p/2\geq 1$, we have%
\begin{equation*}
A_{{\scriptsize \tau }}^{q}=\sum_{s\leq {\scriptsize \tau }}\big[%
(A_{s-}+\Delta A_{s})^{q}-A_{s-}^{q}\big]=\int_{0}^{{\scriptsize \tau }%
}\int_{U}\big[(A_{s-}+g(s,\upsilon )^{2})^{q}-A_{s-} ^{q}\big]p(ds,d\upsilon
)
\end{equation*}%
and%
\begin{equation*}
\mathbf{E}[A_{{\scriptsize \tau }}^{q}]=\mathbf{E}\int_{0}^{{\scriptsize %
\tau }}\int_{U}\big[(A_{s-}+g(s,\upsilon )^{2})^{q}-A_{s-}^{q}\big]\pi
(d\upsilon ,ds).
\end{equation*}%
Since there are two positive constants $c,C$ such that for all non-negative
numbers $a,b$%
\begin{equation*}
C\bigl(b^{q}+a^{q-1}b\bigr)\geq (a+b)^{q}-a^{q}\geq c\bigl(b^{q}+a^{q-1}b%
\bigr),
\end{equation*}%
we have%
\begin{eqnarray}
&&C\mathbf{E}\int_{0}^{{\scriptsize \tau }}\int_{U}\big[|g(s,\upsilon
)|^{p}+A_{s-}^{q-1}g(s,\upsilon )^{2}\big]\pi (d\upsilon ,ds)\geq \mathbf{E}%
[A_{{\scriptsize \tau }}^{q}]  \label{hf0} \\
&&\qquad \geq c\mathbf{E}\int_{0}^{{\scriptsize \tau }}\int_{U}\big[%
|g(s,\upsilon )|^{p}+A_{s-}^{q-1}g(s,\upsilon )^{2}\big]\pi (d\upsilon ,ds).
\notag
\end{eqnarray}%
Hence, 
\begin{eqnarray*}
&&c\mathbf{E}\int_{0}^{{\scriptsize \tau }}\int_{U}|g(s,\upsilon )|^{p}\pi
(d\upsilon ,ds) \leq \mathbf{E}[A_{{\scriptsize \tau }}^{q}] \\
&&\quad \leq C\mathbf{E}\bigg\{\int_{0}^{{\scriptsize \tau }%
}\int_{U}|g(s,\upsilon )|^{p}\pi (d\upsilon ,ds) + A_{{\scriptsize \tau }%
}^{q-1}L_{{\scriptsize \tau }}\bigg\}.
\end{eqnarray*}%
On the other hand, for $q>1$,%
\begin{equation*}
L_{{\scriptsize \tau }}^{q}=q\int_{0}^{{\scriptsize \tau }}L_{s}^{q-1}dL_{s}
\end{equation*}%
and%
\begin{equation*}
\mathbf{E}[L_{{\scriptsize \tau }}^{q}]=q\mathbf{E}\int_{0}^{{\scriptsize %
\tau }}L_{s}^{q-1}dA_{s}\leq q\mathbf{E}[L_{{\scriptsize \tau }}^{q-1}A_{%
{\scriptsize \tau }}].
\end{equation*}%
According to Young's inequality, for each $\varepsilon >0$ there is a
constant $C_{{\scriptsize \varepsilon }}$ such that 
\begin{eqnarray*}
A_{{\scriptsize \tau }}^{q-1}L_{{\scriptsize \tau }} &\leq &{}\varepsilon A_{%
{\scriptsize \tau }}^{q}+C_{{\scriptsize \varepsilon }}L_{{\scriptsize \tau }%
}^{q}, \\
L_{{\scriptsize \tau }}^{q-1}A_{{\scriptsize \tau }} &\leq &{}\varepsilon L_{%
{\scriptsize \tau }}^{q}+C_{{\scriptsize \varepsilon }}A_{{\scriptsize \tau }%
}^{q}.
\end{eqnarray*}%
Therefore, there is a constant $C$ such that%
\begin{eqnarray*}
\mathbf{E[}L_{{\scriptsize \tau }}^{q}] &\leq &C\mathbf{E[}A_{{\scriptsize %
\tau }}^{q}], \\
\mathbf{E}[A_{{\scriptsize \tau }}^{q}] &\leq &C\mathbf{E}\bigg\{\int_{0}^{%
{\scriptsize \tau }}\int_{U}|g(s,\upsilon )|^{p}\pi (d\upsilon ,ds)+L_{%
{\scriptsize \tau }}^{q}\bigg\}, \\
\mathbf{E}[A_{{\scriptsize \tau }}^{q}] &\geq &\mathbf{E}\int_{0}^{%
{\scriptsize \tau }}\int_{U} |g(s,\upsilon )|^{p}\pi (d\upsilon ,ds) ,
\end{eqnarray*}%
and the statement follows.
\end{proof}

\begin{corollary}
\label{cor5}Let $p\geq 2,g=g(s,x,\upsilon )$ be such that $\mathbf{P}$-a.s.%
\begin{equation*}
\int_{0}^{T}\int_{U}\int_{\mathbf{R}^{d}}g(s,x,\upsilon )^{2}\pi (d\upsilon
,ds)dx<\infty ,
\end{equation*}%
and%
\begin{equation*}
Q(t,x)=\int_{0}^{t}\int_{U}g(s,x,\upsilon )q(ds,d\upsilon ),0\leq t\leq T.
\end{equation*}%
Then%
\begin{eqnarray*}
\mathbf{E}\sup_{s\leq {\scriptsize \tau }}|Q(s,\cdot)|_{p}^{p} &\sim &%
\mathbf{E}\Bigg\{\int_{0}^{{\scriptsize \tau }}\int_{U}|g(s,\cdot,\upsilon
)|_{p}^{p}\pi (d\upsilon ,ds)+ \\
&&+\bigg\vert \bigg[\int_{0}^{{\scriptsize \tau }}\int_U g(s,\cdot ,\upsilon
)^{2}\pi (d\upsilon ,ds)\bigg]^{1/2}\bigg\vert _{p}^{p}\Bigg\}
\end{eqnarray*}%
and%
\begin{eqnarray*}
\mathbf{E}\int_{0}^{T}|Q(s,\cdot)|_{p}^{p}ds &\sim& \mathbf{E}%
\int_{0}^{T}\sup_{s\leq t}|Q(s,\cdot)|_{p}^{p}dt \\
&\sim & \mathbf{E}\Bigg\{\int_{0}^{T}\int_{0}^{t}\int_{U}|g(s,\cdot
,\upsilon )|_{p}^{p}\pi (d\upsilon ,ds)dt+ \\
&&\quad +\int_{0}^{T}\bigg\vert \bigg[\int_{0}^{t}\int_U g(s,\cdot ,\upsilon
)^{2}\pi (d\upsilon ,ds)\bigg]^{1/2}\bigg\vert _{p}^{p}dt\Bigg\} ,
\end{eqnarray*}%
where$\vert f\vert^p_p=\int\vert f(x)\vert^pdx $ and $\sim $ denotes the
equivalence of norms.
\end{corollary}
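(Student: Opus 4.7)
The plan is to reduce both equivalences to Lemma~\ref{hl0} applied slicewise at each fixed $x\in\mathbf{R}^d$, then to interchange the spatial integral with time and expectation by Fubini, and finally to reconcile the supremum inside versus outside the spatial $L^p$-norm via Doob's maximal inequality. The hypothesis $\int_0^T\!\int_U\!\int_{\mathbf{R}^d} g^2\,dx\,\pi(d\upsilon,ds)<\infty$ a.s.\ gives, by Fubini, $\int_0^T\!\int_U g(s,x,\upsilon)^2\,\pi(d\upsilon,ds)<\infty$ a.s.\ for a.e.\ $x$; hence $t\mapsto Q(t,x)$ is an $\mathbb{F}$-martingale to which Lemma~\ref{hl0} applies, yielding the pointwise equivalence
$$\mathbf{E}\sup_{t\leq\tau}|Q(t,x)|^p \sim \mathbf{E}\Bigl[\int_0^\tau\!\!\int_U |g(s,x,\upsilon)|^p\pi(d\upsilon,ds) + \bigl(\int_0^\tau\!\!\int_U g(s,x,\upsilon)^2\pi(d\upsilon,ds)\bigr)^{p/2}\Bigr].$$

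For the first claim, the upper bound of $\sim$ is immediate: $\sup_{s\leq\tau}|Q(s,\cdot)|_p^p\leq\int_{\mathbf{R}^d}\sup_{s\leq\tau}|Q(s,x)|^p\,dx$, so integrating the pointwise estimate and applying Fubini on the right gives the $\leq$ direction. For the lower bound I would apply the pointwise lower bound of Lemma~\ref{hl0}, integrate over $x$, and then invoke Doob's maximal inequality $\mathbf{E}\sup_{s\leq\tau}|Q(s,x)|^p\leq(p/(p-1))^p\mathbf{E}|Q(\tau,x)|^p$ pointwise in $x$ to dominate the resulting quantity by $C\mathbf{E}|Q(\tau,\cdot)|_p^p$, which is trivially $\leq C\mathbf{E}\sup_{s\leq\tau}|Q(s,\cdot)|_p^p$. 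This closes the first equivalence.

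For the second claim, apply the first equivalence with the stopping time replaced by a deterministic $t\in[0,T]$ and integrate in $t$; Fubini on the right-hand side delivers the equivalence of $\mathbf{E}\int_0^T\sup_{s\leq t}|Q(s,\cdot)|_p^p\,dt$ with the double time integrals on the right. The middle equivalence $\mathbf{E}\int_0^T|Q(s,\cdot)|_p^p\,ds\sim\mathbf{E}\int_0^T\sup_{s\leq t}|Q(s,\cdot)|_p^p\,dt$ then follows: one direction is trivial from $|Q(t,\cdot)|_p^p\leq\sup_{s\leq t}|Q(s,\cdot)|_p^p$, and the reverse is the Doob-based bound $\mathbf{E}\sup_{s\leq t}|Q(s,\cdot)|_p^p\leq C\mathbf{E}|Q(t,\cdot)|_p^p$ integrated in $t$.

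The only mild obstacle is precisely the interplay between the spatial supremum and the spatial $L^p$-norm: the naive slicewise bound only controls $\int\mathbf{E}\sup|Q(s,x)|^p\,dx$, which sits above $\mathbf{E}\sup|Q(s,\cdot)|_p^p$, so without Doob's inequality to collapse the sup onto the terminal time the lower bound would be lost. Every other step is a routine application of Fubini together with Lemma~\ref{hl0}.
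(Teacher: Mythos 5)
Your proof is correct and follows the route the paper clearly intends (the corollary is stated without proof as a direct consequence of Lemma~\ref{hl0}): apply the lemma slicewise in $x$, integrate with Fubini--Tonelli, and note that the only non-trivial point is that the spatial integral of $\mathbf{E}\sup_{t\le\tau}|Q(t,x)|^{p}$ dominates $\mathbf{E}\sup_{t\le\tau}|Q(t,\cdot)|_{p}^{p}$ from above, which you correctly collapse via Doob's $L^{p}$ maximal inequality through $\mathbf{E}|Q(\tau,\cdot)|_{p}^{p}$. The only cosmetic alternative is to observe that $|Q(t,\cdot)|_{p}$ is itself a nonnegative submartingale (norm of an $L_{p}$-valued martingale) and apply Doob once globally instead of pointwise in $x$, but this changes nothing of substance.
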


\section{Proof of Proposition \protect\ref{main1}}

Let us introduce the functions 
\begin{eqnarray*}
\widetilde{\varphi }_{0} &=&{}\varphi _{0}+\varphi _{1}, \\
\widetilde{\varphi }_{j} &=&{}\varphi _{j-1}+\varphi _{j}+\varphi
_{j+1},\quad j\geqslant 1,
\end{eqnarray*}%
where $\varphi _{j},j\geq 0,$ are defined in Subsection \ref{test}. Let%
\begin{equation*}
h_{s,t}^{j}(x)=\mathcal{F}^{-1} \biggl\{ \exp{\biggl\{ \int_s^t \psi^{(%
{\scriptsize \alpha )}}(r,\xi)dr\biggr\} } \mathcal{F}\widetilde{\varphi }%
_{j}(\xi)\biggr\}(x),\quad j\geqslant 0.
\end{equation*}

According to Lemma 12 in \cite{mikprag1} or inequality (36) and Lemma 16 in 
\cite{MiP09}, there are constants $C$, $c>0$ such that for all $s\leq
t,j\geq 1,$%
\begin{eqnarray}
\int \big\vert h_{s,t}^{j}(x)\big\vert dx &\leq &Ce^{-c2^{j{\scriptsize %
\alpha }} (t-s)}\sum_{k\leq d_{0}}\big[2^{j{\scriptsize \alpha }}(t-s)\big]%
^{k},  \label{ff1} \\
\int |h_{s,t}^{0}(x)|dx &\leq &C.  \notag
\end{eqnarray}

For $g\in B_{pp}^{{\scriptsize \alpha -\frac{\alpha }{p}}}(E_{a,b},V),$ we
set%
\begin{equation*}
g_{j}(t,\cdot )=g(t,\cdot )\ast \varphi _{j},\quad j\geqslant 0.
\end{equation*}%
Obviously, 
\begin{equation*}
{}{}\varphi _{j}\ast \mathcal{I}g(s,t,\cdot )= \mathcal{I}g_j(s,t,\cdot
),\quad j\geqslant 0.
\end{equation*}

Since $\varphi _{j}=\varphi _{j}\ast \widetilde{\varphi }_{j},j\geq 0,$ we
have 
\begin{equation*}
\mathcal{I}g_{j}(s,t,x)=h_{s,t}^{j}\ast g_{j}(s,x).
\end{equation*}%
Therefore, by Minkowski's inequality,%
\begin{eqnarray*}
|\mathcal{I}g|_{\tilde{H}_{p}^{{\scriptsize \beta }}(\tilde{E}_{a,b},V)}^{p}
&=&\int_{a}^{b}\int_{a}^{t}\int \bigg( \sum_{j=0}^{\infty }2^{2{\scriptsize %
\beta} j}\big\vert \varphi_j\ast \mathcal{I}g(s,t,x)\big\vert_V^{2}\bigg) %
^{p/2}dxdsdt \\
&=&\int_{a}^{b}\int_{a}^{t}\int \bigg( \sum_{j=0}^{\infty }2^{2{\scriptsize %
\beta} j}\big\vert h_{s,t}^{j}\ast g_{j}(s,x)\big\vert _V^{2}\bigg) %
^{p/2}dxdsdt \\
&\leq &\int_{a}^{b}\int_{a}^{t}\Bigg( \sum_{j=0}^{\infty }2^{2{\scriptsize %
\beta} j}\bigg\{\int \big\vert h_{s,t}^{j}\ast g_{j}(s,x)\big\vert _V^{p}dx%
\bigg\}^{2/p}\Bigg) ^{p/2}dsdt.
\end{eqnarray*}%
By (\ref{ff1}),%
\begin{eqnarray*}
\bigg\{\int |h_{s,t}^{j}\ast g_{j}(s,x)|_{V}^{p}dx\bigg\}^{1/p} &\leq
&\int\vert h_{s,t}^{j}(x)\vert dx \vert g_{j}(s,\cdot)\vert _{V,p} \\
&\leq &Ce^{-c2^{{\scriptsize \alpha j}}(t-s)}|g_{j}(s,\cdot )|_{V,p},\quad
j\geq 0.
\end{eqnarray*}%
So,%
\begin{eqnarray}
|\mathcal{I}g|_{\tilde{H}_{p}^{{\scriptsize \beta }}(\tilde{E}_{a,b},V)}^{p}
&\leq &\int_{a}^{b}\int_{a}^{t}\Bigg( \sum_{j=0}^{\infty }2^{2{\scriptsize %
\beta} j}\{\int |h_{s,t}^{j}\ast g_{j}(s,x)|_{V}^{p}dx\}^{2/p}\Bigg) %
^{p/2}dsdt  \notag \\
&\leq &C\int_{a}^{b}\int_{a}^{t}\Bigg( \sum_{j=0}^{\infty }e^{-c2^{%
{\scriptsize \alpha j}}(t-s)}2^{2{\scriptsize \beta} j}|g_{j}(s,\cdot
)|_{V,p}^{2}\Bigg) ^{p/2}dsdt  \label{ff2} \\
&=&C\int_{a}^{b}\int_{s}^{b}\Bigg( \sum_{j=0}^{\infty }e^{-c2^{{\scriptsize %
\alpha j}}(t-s)}2^{2{\scriptsize \beta} j}|g_{j}(s,\cdot )|_{V,p}^{2}\Bigg) %
^{p/2}dtds.  \notag
\end{eqnarray}%
If $p=2$, we have immediately%
\begin{eqnarray*}
|\mathcal{I}g|_{H_{2}^{{\scriptsize \beta }}(\tilde{E}_{a,b},V)}^{2} &\leq
&C\int_{a}^{b}\int_{s}^{b}\sum_{j=0}^{\infty }e^{-c2^{{\scriptsize \alpha j}%
}(t-s)}2^{2{\scriptsize \beta} j}|g_{j}(s,\cdot )|_{V,2}^{2}dtds \\
&\leq &C\int_{a}^{b}\sum_{j=0}^{\infty }2^{2{\scriptsize \beta} j}2^{-%
{\scriptsize \alpha j}}|g_{j}(s,\cdot )|_{V,2}^{2}ds .
\end{eqnarray*}%
If $p>2$, we split the sum in (\ref{ff2}) as follows:%
\begin{eqnarray*}
\sum_{j=0}^{\infty }e^{-c2^{{\scriptsize \alpha j}}(t-s)}2^{2{\scriptsize %
\beta} j}|g_{j}(s,\cdot)|_{V,p}^{2} &=&\sum_{j\in J}e^{-c2^{{\scriptsize %
\alpha j}}(t-s)}2^{2{\scriptsize \beta} j}|g_{j}(s,\cdot)|_{V,p}^{2} \\
&&\quad +\sum_{j\in \mathbf{N}_0\setminus J}e^{-c2^{{\scriptsize \alpha j}%
}(t-s)}2^{2{\scriptsize \beta} j}|g_{j}(s,\cdot)|_{V,p}^{2} = A(s,t)+B(s,t),
\end{eqnarray*}%
where $J=\{j\in\mathbf{N}_0:2^{{\scriptsize \alpha }j}(t-s)\leq 1\}$.

Fix $\kappa \in (0,\frac{2\alpha }{p}).$ Using H\"{o}lder's inequality, we
get 
\begin{eqnarray*}
A(s,t) &\leq &\sum_{j\in J}2^{2{\scriptsize \beta }j}2^{{\scriptsize \kappa j%
}}2^{-{\scriptsize \kappa j}}|g_{j}(s,\cdot )|_{V,p}^{2} \\
&\leq &\bigg(\sum_{j\in J}2^{q{\scriptsize \kappa j}}\bigg)^{1/q}\bigg(%
\sum_{j\in J}2^{p{\scriptsize \beta j}}2^{-p{\scriptsize \kappa j/2}%
}|g_{j}(s,\cdot )|_{V,p}^{p}\bigg)^{2/p}
\end{eqnarray*}%
with $q=\frac{p}{p-2}$. Since 
\begin{equation*}
\sum_{j\in J}2^{q{\scriptsize \kappa j}}\leq C(t-s)^{-q{\scriptsize \kappa
/\alpha }},
\end{equation*}%
we have 
\begin{eqnarray*}
A(s,t) &\leq &C(t-s)^{-\frac{{\scriptsize \kappa }}{{\scriptsize \alpha }}}%
\bigg(\sum_{j\in J}2^{p{\scriptsize \beta j}}2^{-p{\scriptsize \kappa j/2}%
}|g_{j}(s,\cdot )|_{V,p}^{p}\bigg)^{2/p} \\
&=&C(t-s)^{-\frac{{\scriptsize \kappa }}{{\scriptsize \alpha }}}\bigg(%
\sum_{j=0}^{\infty }1_{\left\{ (t-s)\leq 2^{-{\scriptsize \alpha j}}\right\}
}2^{p{\scriptsize \beta j}}2^{-p{\scriptsize \kappa j/2}}|g_{j}(s,\cdot
)|_{V,p}^{p}\bigg)^{2/p}.
\end{eqnarray*}%
So,%
\begin{eqnarray*}
\int_{a}^{b}\int_{s}^{b}A(s,t)^{p/2}dtds &\leq
&C\int_{a}^{b}\sum_{j=0}^{\infty }2^{p{\scriptsize \beta j}}2^{-p%
{\scriptsize \kappa j/2}}|g_{j}(s,\cdot )|_{V,p}^{p}\int_{s}^{s+2^{-%
{\scriptsize \alpha j}}}(t-s)^{-\frac{p{\scriptsize \kappa }}{2{\scriptsize %
\alpha }}}dtds \\
&\leq &C\int_{a}^{b}\sum_{j=0}^{\infty }2^{-{\scriptsize \alpha j}}2^{p%
{\scriptsize \beta j}}|g_{j}(s,\cdot )|_{V,p}^{p}ds=C|g|_{B_{pp}^{%
{\scriptsize \beta -\frac{\alpha }{p}}}(E_{a,b},V)}^{p}.
\end{eqnarray*}

Let us consider the sum $B(s,t)$. By H\"{o}lder's inequality, 
\begin{equation*}
B(s,t)\leq \bigg\{\sum_{j\in \mathbf{N}_{0}\setminus J}e^{-c2^{{\scriptsize %
\alpha j}}(t-s)}\bigg\}^{\frac{1}{q}}\bigg\{\sum_{j\in \mathbf{N}%
_{0}\setminus J}e^{-c2^{{\scriptsize \alpha j}}(t-s)}2^{{\scriptsize \beta pj%
}}|g_{j}(s,\cdot )|_{V,p}^{p}\bigg\}^{\frac{2}{p}}
\end{equation*}%
with $q=\frac{p}{p-2}$. Since $e^{-c2^{{\scriptsize \alpha j}}(t-s)}$ is
decreasing in $j$, 
\begin{equation*}
\sum_{j\in \mathbf{N}_{0}\setminus J}e^{-c2^{{\scriptsize \alpha j}%
}(t-s)}\leq \int_{\left\{ 2^{{\scriptsize \alpha r}}(t-s)\geq 1\right\}
}e^{-c2^{-{\scriptsize \alpha }}2^{{\scriptsize \alpha r}}(t-s)}dr\leq C.
\end{equation*}%
Therefore, 
\begin{eqnarray*}
\int_{a}^{b}\int_{s}^{b}B(s,t)^{p/2}dtds &\leq
&C\int_{a}^{b}\sum_{j=0}^{\infty }\int_{s}^{b}e^{-c2^{{\scriptsize \alpha j}%
}(t-s)}dt2^{{\scriptsize \beta pj}}|g_{j}(s,\cdot )|_{V,p}^{p}ds \\
&\leq &C\int_{a}^{b}\sum_{j=0}^{\infty }2^{-{\scriptsize \alpha j}}2^{%
{\scriptsize \beta pj}}|g_{j}(s,\cdot )|_{V,p}^{p}ds.
\end{eqnarray*}%
Finally,%
\begin{eqnarray*}
|\mathcal{I}g|_{\tilde{H}_{p}^{{\scriptsize \beta }}(\tilde{E}_{a,b},V)}^{p}
&\leq &C\bigg[\int_{a}^{b}\int_{s}^{b}A(s,t)^{p/2}dtds+\int_{a}^{b}%
\int_{s}^{b}B(s,t)^{p/2}dtds\bigg] \\
&\leq &C\int_{a}^{b}\sum_{j=0}^{\infty }2^{-{\scriptsize \alpha j}}2^{%
{\scriptsize \beta pj}}|g_{j}(s,\cdot )|_{V,p}^{p}ds\leq C|g|_{B_{pp}^{%
{\scriptsize \beta -\frac{\alpha }{p}}}(E_{a,b},V)}^{p}.
\end{eqnarray*}

The proposition is proved.

\section{Proof of Proposition \protect\ref{main2}}

In the proof we follow an idea communicated by N.V. Krylov.

\subsection{Auxiliary results}

We start with

\begin{lemma}
\label{auxl2}Let $\delta \in (0,1),l\in (-d,\delta )$. Assume that a
function $F:\mathbf{R}_{0}^{d}\rightarrow \mathbf{R}$ satisfies the
inequalities 
\begin{equation*}
|F(\xi )|\leq C|\xi |^{l},|\nabla F(\xi )|\leq C|\xi |^{l-1},\xi \in \mathbf{%
R}_{0}^{d}.
\end{equation*}%
Then%
\begin{equation*}
|\partial ^{{\scriptsize \delta }}F(\xi )|\leq C|\xi |^{l-{\scriptsize %
\delta }},\xi \in \mathbf{R}_{0}^{d}.
\end{equation*}
\end{lemma}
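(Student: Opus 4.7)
The plan is to exploit the scaling invariance of the estimate and then handle the singular integral representation of $\partial^{\delta}$ by splitting into three regions.

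First, for $r>0$ set $F_{r}(\eta):=F(r\eta)$. The fractional Laplacian scales as $\partial^{\delta}F_{r}(\eta) = r^{\delta}\partial^{\delta}F(r\eta)$, and $F_{r}$ satisfies $|F_{r}(\eta)|\leq Cr^{l}|\eta|^{l}$ and $|\nabla F_{r}(\eta)|\leq Cr^{l}|\eta|^{l-1}$, so $r^{-l}F_{r}$ obeys the same hypothesis as $F$ with the same constant $C$. It therefore suffices to show $|\partial^{\delta}F(\xi_{0})|\leq C$ for every unit vector $\xi_{0}$; the general bound $|\partial^{\delta}F(\xi)|\leq C|\xi|^{l-\delta}$ then follows by taking $\xi = r\xi_{0}$ with $r=|\xi|$.

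Since $\delta\in(0,1)$, one has the pointwise representation
\[
\partial^{\delta}F(\xi_{0}) = c_{d,\delta}\int_{\mathbf{R}^{d}}\frac{F(\xi_{0}+y)-F(\xi_{0})}{|y|^{d+\delta}}\,dy,
\]
and I would estimate the three regions $|y|\leq 1/2$, $1/2<|y|<2$, and $|y|\geq 2$ separately. On $|y|\leq 1/2$ the segment joining $\xi_{0}$ to $\xi_{0}+y$ lies in $\{|\xi|\geq 1/2\}$, so $|\nabla F|\leq C$ along it and $|F(\xi_{0}+y)-F(\xi_{0})|\leq C|y|$; the resulting piece is bounded by $C\int_{0}^{1/2}r^{-\delta}\,dr<\infty$ because $\delta<1$. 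On the annulus $1/2<|y|<2$ we bound $|F(\xi_{0}+y)|\leq C|\xi_{0}+y|^{l}$, and the possible singularity at $y=-\xi_{0}$ is integrable precisely because $l>-d$. On $|y|\geq 2$ we have $|\xi_{0}+y|\asymp|y|$, hence $|F(\xi_{0}+y)-F(\xi_{0})|\leq C(|y|^{l}+1)$, and $\int_{|y|\geq 2}|y|^{l-d-\delta}\,dy$ converges because $l<\delta$.

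The only nontrivial step is justifying the pointwise singular integral representation when $F$ is only defined on $\mathbf{R}_{0}^{d}$ and may grow at infinity; this can be done by smoothly truncating $F$ near $0$ and near $\infty$, applying the standard identity to the Schwartz approximants, and passing to the limit using exactly the integrability just verified. Note that the three hypotheses $\delta<1$, $l>-d$, $l<\delta$ each govern exactly one of the three regions, so the interplay of conditions in the statement is sharp.
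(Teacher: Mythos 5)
Your proof is correct and follows essentially the same route as the paper: the pointwise singular-integral representation of $\partial^{\delta}$, a gradient bound near $\xi$ (using $\delta<1$), and the bounds $|F(\eta)|\leq C|\eta|^{l}$ away from $\xi$ (using $l>-d$ at the singularity and $l<\delta$ at infinity). The only cosmetic differences are that you normalize to $|\xi|=1$ by scaling where the paper substitutes $y=|\xi|\bar{y}$ inside the far region, and that you split the far region into an annulus plus a tail where the paper handles both integrability issues in the single integral $\sup_{|w|=1}\int_{|\bar{y}|\geq 1/2}|w+\bar{y}|^{l}|\bar{y}|^{-d-\delta}\,d\bar{y}$.
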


\begin{proof}
For any $\xi \in \mathbf{R}_{0}^{d},$%
\begin{eqnarray*}
|\partial ^{{\scriptsize \delta }}F(\xi )| &=& C \bigg\vert \int [F(\xi
+y)-F(\xi )]\frac{dy}{|y|^{d+{\scriptsize \delta }}}\bigg\vert \\
&\leq & C \int_{|y|>\frac{1}{2}|\xi |}[|F(\xi +y)|+|F(\xi )|]\frac{dy}{%
|y|^{d+{\scriptsize \delta }}} \\
&&+C\int_{|y|\leq \frac{1}{2}|\xi |}\int_{0}^{1}|\nabla F(\xi +sy)|\frac{%
dsdy }{|y|^{d+{\scriptsize \delta -1}}},
\end{eqnarray*}%
where the constant $C=C(\delta )$.

Changing the variable of integration, $y=|\xi |\bar{y},$ we have%
\begin{eqnarray*}
\int_{|y|>\frac{1}{2}|\xi |}|F(\xi +y)|\frac{dy}{|y|^{d+{\scriptsize \delta }%
}} &\leq &C\int |\xi +y|^{l}\frac{dy}{|y|^{d+{\scriptsize \delta }}} \\
&=&C|\xi |^{l-{\scriptsize \delta }}\int_{|\bar{y}|\geq \frac{1}{2}}~|\frac{%
\xi }{|\xi |}+\bar{y}|^{l}\frac{d\bar{y}}{|\bar{y}|^{d+{\scriptsize \delta }}%
} \\
&\leq &C|\xi |^{l-{\scriptsize \delta }}\sup_{|w|=1}\int_{|\bar{y}|\geq 
\frac{1}{2}}~|w+\bar{y}|^{l}\frac{d\bar{y}}{|\bar{y}|^{d+{\scriptsize \delta 
}}}.
\end{eqnarray*}%
Obviously,%
\begin{eqnarray*}
\int_{|y|\geq \frac{1}{2}|\xi |}|F(\xi )|\frac{dy}{|y|^{d+{\scriptsize %
\delta }}} &\leq &C|\xi |^{l}\int_{|y|\geq \frac{1}{2}|{\scriptsize \xi |}}%
\frac{dy}{|y|^{d+{\scriptsize \delta }}} \leq C|\xi |^{l-{\scriptsize \delta 
}}.
\end{eqnarray*}%
If $|y|\leq \frac{1}{2}|\xi |,s\in (0,1)$, then $|\xi +sy|\geq |\xi
|-s|y|\geq \frac{1}{2}|\xi |$ and 
\begin{eqnarray*}
\int_{|y|\leq \frac{1}{2}|{\scriptsize \xi |}}\int_{0}^{1}|\nabla F(\xi +sy)|%
\frac{dsdy}{|y|^{d+{\scriptsize \delta -1}}} &\leq &C\int_{|y|\leq \frac{1}{2%
}|{\scriptsize \xi |}} \int_{0}^{1}|\xi +sy|^{l-1}\frac{dsdy}{|y|^{d+%
{\scriptsize \delta -1}}} \\
&\leq &C\int_{|y|\leq \frac{1}{2}|{\scriptsize \xi |}}|\xi |^{l-1}\frac{dy}{%
|y|^{d+{\scriptsize \delta -1}}}\leq C|\xi |^{l-{\scriptsize \delta }}.
\end{eqnarray*}
\end{proof}

We will need some facts about maximal and sharp functions as well (see \cite%
{stein2}).

For each $(s,z)\in \mathbf{R}^{d+1}$ and $\delta >0$ we consider a family of
open sets $B(s,z;\delta )$ of the form%
\begin{equation*}
B(s,z;\delta )=(s-\delta ^{\alpha },s+\delta ^{\alpha })\times (z_{1}-\delta
,z_{1}+\delta )\times \ldots \times (z_{d}-\delta ,z_{d}+\delta ).
\end{equation*}%
Let $\mathbb{Q}_{\delta }$ be the family of all $B(s,z;\delta ),(s,z)\in 
\mathbf{R}^{d+1},$ and $\mathbb{Q=\cup }_{\delta >0}\mathbb{Q}_{\delta }$.
The collection $\mathbb{Q}$ satisfies the basic assumptions in \cite{stein2}
(see I.2.3 in \cite{stein2}).

Let $h\in L_{1}(\mathbf{R}^{d+1})$. For the rectangle $B\in \mathbb{Q}$ we
set%
\begin{eqnarray*}
h_{B} &=&\frac{1}{\text{mes}\,B}\int_{B}h(s,y)dsdy, \\
h_{B}^{\#} &=&\frac{1}{\text{mes}\,B}\int_{B}|h(s,y)-h_{B}|dsdy.
\end{eqnarray*}%
Let%
\begin{eqnarray*}
{M}h(t,x) &=&\sup_{\delta >0}\frac{1}{\text{mes}\,B(t,x;\delta )}%
\int_{B(t,x;\delta )}|h(s,y)|dsdy, \\
h^{\#}(t,x) &=&\sup_{B\in \mathbb{Q},(t,x)\in B}h_{B}^{\#},(t,x)\in \mathbf{R%
}^{d+1}.
\end{eqnarray*}%
In the definition of $h^{\#}$ the supremum is taken over all $B\in \mathbb{Q}%
=\cup _{\delta >0}\mathbb{Q}_{\delta }$ such that $(t,x)\in B$. The
functions ${M}h$ and $h^{\#}$ are called the maximal and sharp functions of $%
h$.

By H\"{o}lder's inequality for $h\in L_{2}(\mathbf{R}^{d+1}),$%
\begin{equation}
\left( h_{B}^{\#}\right) ^{2}\leq \frac{1}{\text{mes}B}%
\int_{B}h^{2}(s,y)dsdy,  \label{2.2}
\end{equation}%
\begin{equation}
\left( h_{B}^{\#}\right) ^{2}\leq \frac{1}{(\text{mes}~B)^{2}}%
\int_{B}\int_{B}(h(s,y)-h(u,z))^{2}dudzdsdy.  \label{2.3}
\end{equation}

We will also use the maximal functions defined by 
\begin{equation*}
\mathcal{M}f(x)=\sup_{r>0}\frac{1}{\mbox{mes}\,B_{r}(0)}%
\int_{B_{r}(x)}|f(y)|dy,
\end{equation*}%
where $B_{r}(x)=\{y\in \mathbf{R}^{d}:|y-x|<r\}$.

As it is well known (\cite{stein2}, Theorem IV.2.2, ), for $h\in L_{p}(%
\mathbf{R}^{d+1})$, $p>1$, the following norms are equivalent: 
\begin{equation}
|h|_{p}\sim |Mh|_{p}\sim |h^{\#}|_{p}.  \label{eq19}
\end{equation}%
Also, for $h\in L_{p}(\mathbf{R}^{d})$, $p>1$, 
\begin{equation}
|h|_{p}\sim |\mathcal{M}h|_{p}.  \label{eq20}
\end{equation}

\begin{lemma}
\label{r2} Let $f\in C_{0}^{\infty }(\mathbf{R}^{d})$ and $v$ be a
continuously differentiable function on $\mathbf{R}^{d}$ such that $%
\lim_{|z|\rightarrow \infty }|v(z)|=0$. Let $R,R_{1}\geq 0,\ x,y\in \mathbf{R%
}^{d}$, $|x-y|\leq R_{1}$ and $f(z)=0$ if $|y-z|\leq R$.

Then 
\begin{equation*}
|(f\ast v)(y)|\leq C\bigl[\mathcal{M}f^{2}(x)\bigr]^{\frac{1}{2}%
}\int_{R}^{\infty }(R_{1}+\rho )^{d}\Phi (\rho )d\rho ,
\end{equation*}%
where the constant $C=C(d)$ and 
\begin{equation*}
\Phi (\rho )=\biggl(\int_{|w|=1}\bigl(\nabla v(\rho w),w\bigr)^{2}dw\biggr)^{%
\frac{1}{2}},
\end{equation*}%
where $dw$ is the counting measure on $\left\{ -1,1\right\} $ if $d=1,$ and $%
dw$ is the Lebesgue measure if $d\geq 2.$
\end{lemma}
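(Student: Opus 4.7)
The plan is to bound $(f \ast v)(y)$ by two successive applications of the Cauchy--Schwarz inequality, after first rewriting $v$ by the fundamental theorem of calculus along rays. Since $v(z) \to 0$ as $|z| \to \infty$, for every $\omega$ on the unit sphere and every $\rho > 0$,
$$
v(\rho \omega) = -\int_\rho^\infty (\nabla v(s\omega), \omega)\, ds.
$$
The hypothesis $f(z) = 0$ for $|y - z| \leq R$ restricts the convolution to $|w| > R$ after the substitution $w = y - z$. Passing to polar coordinates $w = \rho \omega$ and interchanging the $\rho$ and $s$ integrations by Fubini yields
$$
(f \ast v)(y) = -\int_R^\infty \int_{|\omega|=1} (\nabla v(s\omega), \omega) \Bigl[\int_R^s f(y - \rho\omega) \rho^{d-1}\, d\rho\Bigr] d\omega\, ds.
$$

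Next I apply Cauchy--Schwarz in the $\omega$ variable. This extracts the factor $\Phi(s)$ from the radial derivative of $v$ and leaves behind
$$
\Bigl(\int_{|\omega|=1} \Bigl|\int_R^s f(y-\rho\omega) \rho^{d-1}\, d\rho\Bigr|^2 d\omega\Bigr)^{1/2}.
$$
A second application of Cauchy--Schwarz, this time in $\rho$ using $\int_R^s \rho^{d-1}\, d\rho \leq C s^d$, gives the pointwise estimate $\bigl|\int_R^s f(y-\rho\omega)\rho^{d-1}d\rho\bigr|^2 \leq C s^d \int_R^s f(y-\rho\omega)^2 \rho^{d-1}\, d\rho$. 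Integrating over $\omega$ and returning to Cartesian coordinates turns the right-hand side into $C s^d \int_{R < |w| < s} f(y-w)^2\, dw$. Since $|x-y| \leq R_1$, the translated annulus $\{y - w : R < |w| < s\}$ is contained in $B_s(y) \subseteq B_{s+R_1}(x)$, and the definition of the Hardy--Littlewood maximal function yields $\int_{B_{s+R_1}(x)} f^2\, du \leq C(s+R_1)^d \mathcal{M}f^2(x)$. Taking square roots,
$$
\Bigl(\int_{|\omega|=1} \Bigl|\int_R^s f(y-\rho\omega)\rho^{d-1}d\rho\Bigr|^2 d\omega\Bigr)^{1/2} \leq C(s+R_1)^d [\mathcal{M}f^2(x)]^{1/2},
$$
and combining with the representation above produces exactly the claimed bound.

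There is no substantive obstacle beyond the correct bookkeeping. The only delicate points are choosing the order of the two Cauchy--Schwarz steps (outer in $\omega$ so that $\Phi$ appears with precisely the squared integrand in the statement; inner in $\rho$ so that the $f^2$ that emerges can be recombined into a ball integral via $\rho^{d-1}\, d\rho\, d\omega = dw$), and invoking the inclusion $B_s(y) \subseteq B_{s+R_1}(x)$ at the right moment so that the maximal function is evaluated at $x$. The one-dimensional case is handled uniformly by the convention that $dw$ is counting measure on $\{-1, 1\}$, which makes the radial decomposition $\int g(w)\, dw = \sum_{\omega \in \{-1,1\}}\int_0^\infty g(\rho\omega)\, d\rho$ formally identical to its higher-dimensional counterpart.
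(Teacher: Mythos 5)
Your argument is correct and is essentially the paper's own proof: the paper obtains the same identity $(f\ast v)(y)=-\int_R^\infty\int_{|\omega|=1}(\nabla v(s\omega),\omega)\bigl[\int_R^s f(y-\rho\omega)\rho^{d-1}d\rho\bigr]d\omega\,ds$ by integrating by parts in the radial variable (equivalent to your fundamental-theorem-of-calculus plus Fubini step), and then applies the same two Cauchy--Schwarz estimates, the inclusion $B_s(y)\subseteq B_{s+R_1}(x)$, and the maximal function bound. No gaps.
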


\begin{proof}
Integrating by parts, we have 
\begin{eqnarray*}
\int f(y-z)v(z)dz &=&\int_{R}^{\infty }\int_{|w|=1}f(y-\rho w)v(\rho w)\rho
^{d-1}dwd\rho \\
&=&\int_{R}^{\infty }\int_{|w|=1}v(\rho w)\frac{d}{d\rho }\int_{R}^{%
{\scriptsize \rho }}f(y-rw)r^{d-1}drdwd\rho \\
&=&\int_{|w|=1}\bigg[v(\rho w)\int_{R}^{{\scriptsize \rho }}f(y-rw)r^{d-1}dr%
\bigg]\bigg\vert_{R}^{\infty }dw \\
&&\quad -\int_{R}^{\infty }\int_{|w|=1}\int_{R}^{{\scriptsize \rho }%
}f(y-rw)r^{d-1}dr\bigl(\nabla v(\rho w),w\bigr)dwd\rho \\
&=&-\int_{R}^{\infty }\int_{|w|=1}\int_{R}^{{\scriptsize \rho }%
}f(y-rw)r^{d-1}dr\bigl(\nabla v(\rho w),w\bigr)dwd\rho .
\end{eqnarray*}%
Therefore, by H\"{o}lder's inequality, 
\begin{eqnarray*}
|(f\ast v)(y)| &\leq &\int_{R}^{\infty }\bigg(\int_{R}^{{\scriptsize \rho }%
}\int_{|w|=1}f^{2}(y-rw)r^{d-1}dwdr\bigg)^{\frac{1}{2}}\biggl(\int_{R}^{%
{\scriptsize \rho }}r^{d-1}dr\biggr)^{\frac{1}{2}}\Phi (\rho )d\rho \\
&\leq &C\int_{R}^{\infty }\biggl(\int_{B_{{\scriptsize \rho }}(y)}f^{2}(z)dz%
\biggr)^{\frac{1}{2}}\rho ^{\frac{d}{2}}\Phi (\rho )d\rho \\
&\leq &C\int_{R}^{\infty }\biggl(\int_{B_{R_{1}+{\scriptsize \rho }%
}(x)}f^{2}(z)dz\biggr)^{\frac{1}{2}}\rho ^{\frac{d}{2}}\Phi (\rho )d\rho \\
&\leq &C\int_{R}^{\infty }(R_{1}+\rho )^{\frac{d}{2}}\rho ^{\frac{d}{2}}%
\biggl(\sup_{{\scriptsize \rho >0}}(R_{1}+\rho )^{-d}\int_{B_{R_{1}+%
{\scriptsize \rho }}(x)}f^{2}(z)dz\biggr)^{\frac{1}{2}}\Phi (\rho )d\rho \\
&\leq &C\bigl[\mathcal{M}f^{2}(x)\bigr]^{\frac{1}{2}}\int_{R}^{\infty
}(R_{1}+\rho )^{d}\Phi (\rho )d\rho .
\end{eqnarray*}
\end{proof}

\subsection{Proof of Proposition \protect\ref{main2}}

1$^{0}.$ Since $(I-\Delta )^{{\scriptsize \beta /2}}:H_{p}^{s}\rightarrow
H_{p}^{s-{\scriptsize \beta /2}},s\in \mathbf{R},$ is an isomorphism (see 
\cite{Ste71}), it is enough to prove the first inequality for $\beta =0$.
Also, it is enough to consider $g\in C_{0}^{\infty }(\mathbf{R}^{d+1},V),$
the space of smooth $V$ -valued functions on $\mathbf{R}^{d+1}$ with compact
support.

Let us introduce the function 
\begin{equation*}
\tilde{\psi}^{({\scriptsize \alpha )}}(t,\xi )=\psi ^{({\scriptsize \alpha )}%
}\biggl(t,\frac{\xi }{|\xi |}\biggr),\quad \xi \in \mathbf{R}_{0}^{d}=\{\xi
\in \mathbf{R}^{d}:\xi \neq 0\}.
\end{equation*}%
Obviously, if $\alpha \neq 1,$ 
\begin{equation}
\psi ^{({\scriptsize \alpha )}}(t,\xi )=|\xi |^{{\scriptsize \alpha }}\tilde{%
\psi}^{({\scriptsize \alpha )}}(t,\xi ).  \label{du}
\end{equation}%
Since 
\begin{eqnarray*}
(w,\xi )\ln |(w,\xi )| &=&|\xi |(w,\frac{\xi }{|\xi |})\ln [|(w,\frac{\xi }{%
|\xi |})|\xi |] \\
&=&|\xi |(w,\frac{\xi }{|\xi |})\ln |(w,\frac{\xi }{|\xi |})+|\xi |(w,\frac{%
\xi }{|\xi |})\ln |\xi |
\end{eqnarray*}%
and $\int_{|w|=1}wm^{(1)}(t,w)dw=0$, the equality (\ref{du}) holds for $%
\alpha =1$ as well. By Assumption \textbf{B}, 
\begin{equation*}
\mbox{Re}\,\tilde{\psi}^{({\scriptsize \alpha )}}(t,\xi )\leq -\mu <0,\quad
t\in \mathbf{R},\ \xi \in \mathbf{R}_{0}^{d}.
\end{equation*}

Let $p=2$ and $g\in H^0_2(E_{a,b},V)$. Then, by Parseval's equality, 
\begin{eqnarray}
\vert \partial^{{\scriptsize \alpha /2}}\mathcal{I}g\vert^2_{H^0_2(\tilde
E_{a,b},V)} &=& \int_a^b\int_a^t\int \vert \partial^{{\scriptsize \alpha /2}}%
\mathcal{I}g(s,t,x)\vert^2 _V dxdsdt  \notag \\
&=&\int_a^b\int_a^t\int \big\vert \vert\xi\vert^{{\scriptsize \alpha /2}} 
\text{e}^{\vert {\scriptsize \xi\vert^{\alpha}\int_s^t\tilde{\psi}^{(\alpha
)}(r,\xi)dr}} \mathcal{F}g(s,\xi) \big\vert^2_V d\xi dsdt  \notag \\
&\leq&\int_a^b\int_a^t\int \vert\xi\vert^{{\scriptsize \alpha}} \text{e}^{-2%
{\scriptsize \mu\vert \xi\vert^{\alpha}(t-s)}} \vert \mathcal{F}g(s,\xi)
\vert^2_V d\xi dsdt  \notag \\
&=&\int\int_a^b\int_s^b \vert\xi\vert^{{\scriptsize \alpha}} \text{e}^{-2%
{\scriptsize \mu\vert \xi\vert^{\alpha}(t-s)}} \vert \mathcal{F}g(s,\xi)
\vert^2_V dtdsd\xi  \notag \\
&\leq&(2\mu)^{-1} \int_a^b\int \vert \mathcal{F}g(s,\xi) \vert^2_V d\xi ds 
\notag \\
&=& (2\mu)^{-1} \vert g\vert^2_{H^0_2(E_{a,b},V)} .  \label{eq23}
\end{eqnarray}

2$^{0}$. Let $p>2$. We extend the functions $g\in H_{p}^{0}(E_{a,b},V)$ by
zero outside the interval $[a,b]$ if necessary. Obviously, the extended
functions belong to $H_{p}^{0}(E,V)$, where $E=E_{-\infty ,\infty }=\mathbf{R%
}^{d+1}$.

For $g\in H_{p}^{0}(E,V)$ we denote%
\begin{eqnarray*}
Gg(s,y) &=&\left\{ \int_{-\infty }^{s}\left\vert \int \partial ^{%
{\scriptsize \alpha /2}}G_{u,s}(y-y^{\prime })g(u,y^{\prime })dy^{\prime
}\right\vert _{V}^{2}du\right\} ^{1/2} \\
&=&\left\{ \int_{-\infty }^{s}\left\vert \int G_{u,s}(y-y^{\prime })\partial
^{{\scriptsize \alpha /2}}g(u,y^{\prime })dy^{\prime }\right\vert
_{V}^{2}du\right\} ^{1/2}.
\end{eqnarray*}%
Note that by triangle inequality in $L_{2}((-\infty ,s],V)$ we have for $%
g_{1},g_{2},\in H_{p}^{0}(E,V),$ 
\begin{eqnarray}
G(g_{1}+g_{2})(s,y) &\leq &Gg_{1}(s,y)+Gg_{2}(s,y),  \label{maf2} \\
|G(g_{1}+g_{2})(s,y)-Gg_{1}(s,y)| &\leq &Gg_{2}(s,y).  \notag
\end{eqnarray}%
According to (\ref{eq19}) and (\ref{eq20}) it is enough to prove that there
is a constant $C$ such that for all $g\in H_{p}^{0}(E,V),\ (t,x)\in \mathbf{R%
}^{d+1}$ 
\begin{equation}
\left( Gg\right) ^{\#}(t,x)\leq C(\mathcal{M}_{t}\mathcal{M}%
_{x}|g|_{V}^{2}(t,x))^{1/2},  \label{2.5}
\end{equation}%
where $\mathcal{M}_{t}$ and $\mathcal{M}_{x}$ denote the maximal functions
defined using the balls in $\mathbf{R}$ and $\mathbf{R}^{d}$ and 
\begin{equation*}
\left( Gg\right) ^{\#}(t,x)=\sup_{B\in \mathbb{Q},(t,x)\in B}\frac{1}{\text{%
mes(}B)}\int_{B}|Gg(s,y)-(Gg)_{B}|dsdy.
\end{equation*}

Since $B\in \mathbb{Q}$ is of the form%
\begin{eqnarray*}
B &=&(s_{0}-\delta ^{\alpha },s_{0}+\delta ^{\alpha })\times (z_{1}-\delta
,z_{1}+\delta )\times \ldots \times (z_{d}-\delta ,z_{d}+\delta )\} \\
&=&(\tilde{s}_{0},z)+\tilde{B}(0,0;\delta ),
\end{eqnarray*}%
with $\tilde{s}_{0}=s_{0}+\delta ^{\alpha }$,$\tilde{B}(0,0;\delta
)=(-2\delta ^{\alpha },0)\times (-\delta ,\delta )^{d},$ it is
straightforward to verify that%
\begin{eqnarray*}
&&\frac{1}{\text{mes(}B)}\int_{B}|Gg(s,y)-(Gg)_{B}|dsdy \\
&=&\frac{1}{\text{mes(}Q_{0})}\int_{Q_{0}}|Gg(\tilde{s}_{0}+\delta ^{\alpha
}s,z+\delta y)-(Gg(\tilde{s}_{0}+\delta ^{\alpha }\cdot ,z+\delta \cdot
))_{Q_{0}}|dsdy,
\end{eqnarray*}%
where $Q_{0}=\tilde{B}(0,0;1)$.

Changing the variable of integration, $u=\tilde{s}_{0}+\delta ^{\alpha
}s,y^{\prime }=z+\delta y$, we see that 
\begin{eqnarray*}
&&Gg(\tilde{s}_{0}+\delta ^{\alpha }t,z+\delta x) \\
&=&\left\{ \int_{-\infty }^{\tilde{s}_{0}+\delta ^{\alpha }t}\left\vert \int
\partial ^{{\scriptsize \alpha /2}}G_{u,\tilde{s}_{0}+\delta ^{\alpha
}t}(z+\delta x-y^{\prime })g(u,y^{\prime })dy^{\prime }\right\vert
_{V}^{2}du\right\} ^{1/2} \\
&=&\delta ^{\frac{\alpha }{2}+d}\left\{ \int_{-\infty }^{t}\left\vert \int
\partial ^{{\scriptsize \alpha /2}}G_{\tilde{s}_{0}+\delta ^{\alpha }s,%
\tilde{s}_{0}+\delta ^{\alpha }t}(\delta (x-y))g(\tilde{s}_{0}+\delta
^{\alpha }s,z+\delta y)dy\right\vert _{V}^{2}ds\right\} ^{1/2} \\
&=&\left\{ \int_{-\infty }^{t}\left\vert \int \partial ^{{\scriptsize \alpha
/2}}G_{s,t}^{\tilde{s}_{0},\delta }(x-y)g(\tilde{s}_{0}+\delta ^{\alpha
}s,z+\delta y)dy\right\vert _{V}^{2}ds\right\} ^{1/2},
\end{eqnarray*}%
where%
\begin{equation*}
G_{s,t}^{s_{0},\delta }(x)=\mathcal{F}^{-1}\left( \exp \left\{
\int_{s}^{t}\psi ^{({\scriptsize \alpha )}}(s_{0}+\delta ^{\alpha }r,\xi
)dr\right\} \right) (x)
\end{equation*}%
with%
\begin{eqnarray*}
{}\psi ^{({\scriptsize \alpha )}}(s_{0}+\delta ^{\alpha }t,\xi )
&=&i(b(s_{0}+\delta ^{\alpha }t),\xi )1_{{\scriptsize \alpha =1}%
}-\sum_{i,j=1}^{d}B^{ij}(s_{0}+\delta ^{\alpha }t)\xi _{i}\xi _{j}1_{%
{\scriptsize \alpha =2}} \\
&&-C\int_{S^{d-1}}|(w,\xi )|^{{\scriptsize \alpha }}\Big[1-i\Big(\tan \frac{%
\alpha \pi }{2}\mbox{sgn}(w,\xi )1_{{\scriptsize \alpha \neq 1}} \\
&&-\frac{2}{\pi }\mbox{sgn}(w,\xi )\ln |(w,\xi )|1_{{\scriptsize \alpha =1}}%
\Big)\Big]m^{(\alpha )}(s_{0}+\delta ^{\alpha }t,w)dw.
\end{eqnarray*}%
Note that for every $\tilde{s}_{0}\in \mathbf{R}^{d},\delta >0,$ the
coefficients $b(s_{0}+\delta ^{\alpha }t),B^{ij}(s_{0}+\delta ^{\alpha
}t),m^{(\alpha )}(s_{0}+\delta ^{\alpha }t,w),t\in \mathbf{R,}w\in S^{d-1},$
satisfy the assumptions \textbf{A,B} with the same constants $K$ and $\mu $.
Therefore for (\ref{2.5}) it is enough to show the inequality 
\begin{equation}
\left( Gg\right) _{Q_{0}}^{\#}\leq C\left( \mathcal{M}_{t}\mathcal{M}%
_{x}|g(t,x\right) |_{V}^{2})^{1/2},(t,x)\in Q_{0},  \label{2.6}
\end{equation}%
with 
\begin{equation*}
Q_{0}=\tilde{B}(0,0;1)=\left\{ (t,x)\in \lbrack -2,0]\times \lbrack
-1,1]^{d}\right\}
\end{equation*}

We consider the following three cases: \vskip6pt

(1) $g(t,x)=0,(t\,,x)\notin \lbrack -12,12]\times B_{3\sqrt{d}}(0);$

\vskip3pt (2) $g(t,x)=0,(t,x)\notin \lbrack -12,12]\times \mathbf{R}^{d};$

\vskip3pt (3) $g(t,x)=0,t\geq -8,x\in \mathbf{R}^{d}\,$.

For the estimates of the derivatives of $G_{u,s}(x)$ the following
representation is helpful. For $u<s,x\in \mathbf{R}^{d},j,k=1,\ldots ,d,$%
\begin{eqnarray}
\partial _{j}\partial _{k}\partial ^{{\scriptsize \alpha /2}}G_{u,s}(x)
&=&(s-u)^{-\frac{d}{{\scriptsize \alpha }}-\frac{1}{2}-\frac{2}{{\scriptsize %
\alpha }}}F_{u,s}^{j,k}\left( (s-u)^{-\frac{1}{{\scriptsize \alpha }}%
}x\right) ,  \label{maf3} \\
\partial _{j}\partial ^{{\scriptsize \alpha /2}}G_{u,s}(x) &=&(s-u)^{-\frac{d%
}{{\scriptsize \alpha }}-\frac{1}{2}-\frac{1}{{\scriptsize \alpha }}%
}F_{u,s}^{j}\left( (s-u)^{-\frac{1}{{\scriptsize \alpha }}}x\right) ,  \notag
\\
\partial _{s}\partial ^{\frac{{\scriptsize \alpha }}{2}}G_{u,s}(x)
&=&(s-u)^{-\frac{d}{{\scriptsize \alpha }}-\frac{3}{2}}\bar{F}_{u,s}((s-u)^{-%
\frac{1}{{\scriptsize \alpha }}}x),  \notag \\
\partial _{j}\partial _{s}\partial ^{\frac{{\scriptsize \alpha }}{2}%
}G_{u,s}(x) &=&(s-u)^{-\frac{d}{{\scriptsize \alpha }}-\frac{3}{2}-\frac{1}{%
{\scriptsize \alpha }}}\bar{F}_{u,s}^{j}((s-u)^{-\frac{1}{{\scriptsize %
\alpha }}}x),  \notag
\end{eqnarray}%
with 
\begin{eqnarray*}
F_{u,s}^{j,k} &=&\mathcal{F}^{-1}\left\{ -\xi _{j}\xi _{k}|\xi |^{\frac{%
{\scriptsize \alpha }}{2}}\exp \left\{ -|\xi |^{{\scriptsize \alpha }}\frac{1%
}{(s-u)}\int_{u}^{s}\tilde{\psi}^{({\scriptsize \alpha )}}(r,\xi )dr\right\}
\right\} , \\
F_{u,s}^{j} &=&\mathcal{F}^{-1}\left\{ i\xi _{k}|\xi |^{\frac{{\scriptsize %
\alpha }}{2}}\exp \left\{ -|\xi |^{{\scriptsize \alpha }}\frac{1}{(s-u)}%
\int_{u}^{s}\tilde{\psi}^{({\scriptsize \alpha )}}(r,\xi )dr\right\}
\right\} , \\
\bar{F}_{u,s} &=&\mathcal{F}^{-1}\left\{ -|\xi |^{\frac{3}{2}{\scriptsize %
\alpha }}\tilde{\psi}^{({\scriptsize \alpha )}}(s,\xi )\exp \left\{ -|\xi |^{%
{\scriptsize \alpha }}\frac{1}{(s-u)}\int_{u}^{s}\tilde{\psi}^{({\scriptsize %
\alpha )}}(r,\xi )dr\right\} \right\} , \\
\bar{F}_{u,s}^{j} &=&\mathcal{F}^{-1}\left\{ -i\xi _{j}|\xi |^{\frac{3}{2}%
{\scriptsize \alpha }}\tilde{\psi}^{({\scriptsize \alpha )}}(s,\xi )\exp
\left\{ -|\xi |^{{\scriptsize \alpha }}\frac{1}{(s-u)}\int_{u}^{s}\tilde{\psi%
}^{({\scriptsize \alpha )}}(r,\xi )dr\right\} \right\} .
\end{eqnarray*}%
By definition of the inverse Fourier transform, all functions $%
F_{u,s}^{j,k},F_{u,s}^{j},\bar{F}_{u,s},\bar{F}_{u,s}^{j}$ are uniformly
bounded.

\vskip6pt 3$^{0}$. First, we prove that in the case (1) 
\begin{equation}
\int_{Q_{0}}(Gg)(s,y)^{2}dsdy\leq C\mathcal{M}_{t}\mathcal{M}%
_{x}|g(t,x)|_{V}^{2}  \label{2.7}
\end{equation}%
for all $(t,x)\in Q_{0}.$

Repeating the proof of (\ref{eq23}), we have 
\begin{eqnarray*}
\int_{Q_{0}}(Gg)^{2}(s,y)dsdy &\leq &\int_{-\infty }^{\infty }\int_{-\infty
}^{s}\int |\xi |^{{\scriptsize \alpha }}e^{-2{\scriptsize \mu |\xi |^{\alpha
}(s-u)}}|\mathcal{F}g(u,\xi )|_{V}^{2}d\xi duds \\
&\leq &(2\mu )^{-1}\int_{-\infty }^{\infty }\int |g(u,y)|_{V}^{2}dudy \\
&=&(2\mu )^{-1}\int_{-12}^{12}\int_{B_{2\sqrt{d}}(0)}|g(u,y)|_{V}^{2}dydu.
\end{eqnarray*}%
Now for every $(t,x)\in Q_{0},$%
\begin{eqnarray*}
&&\int_{-12}^{12}\int_{B_{3\sqrt{d}}(0)}|g(u,y)|_{V}^{2}dydu \\
&&\quad \leq \text{mes}\,(B_{5\sqrt{d}}(0))\int_{-12}^{12}\frac{1}{\text{mes}%
\,(B_{5\sqrt{d}}(x))}\int_{B_{5\sqrt{d}}(x)}|g(u,y)|_{V}^{2}dydu \\
&&\quad \leq \text{mes}\,B_{5\sqrt{d}}(0)\int_{-12}^{12}\mathcal{M}%
_{x}|g(u,x)|_{V}^{2}du \\
&&\quad \leq C\mathcal{M}_{t}\mathcal{M}_{x}|g(t,x)|_{V}^{2}
\end{eqnarray*}%
and (\ref{2.7}) is proven.

4$^{0}$. Now we prove that (\ref{2.7}) holds in the case (2) as well. Since (%
\ref{2.7}) holds for $g(t,z)=0,(t\,,z)\notin \lbrack -12,12]\times B_{3\sqrt{%
d}}(0)$, it is enough to consider $g(t,z)$ such that $g(t,z)=0$ if $|t|>12$
or $|z|\leq 2\sqrt{d}$. By Minkowski's inequality,%
\begin{eqnarray*}
(Gg)^{2}(s,y) &=&\int_{-\infty }^{s}\left\vert \int \partial ^{{\scriptsize %
\alpha /2}}G_{u,s}(y-y^{\prime })g(u,y^{\prime })dy^{\prime }\right\vert
_{V}^{2}du \\
&\leq &\int_{-12}^{s}\left( \int |\partial ^{{\scriptsize \alpha /2}%
}G_{u,s}(y-y^{\prime })|~|g(u,y^{\prime })|_{V}dy^{\prime }\right) ^{2}du.
\end{eqnarray*}%
According to Lemma \ref{r2} (in our case $R=\sqrt{d},R_{1}=2\sqrt{d}),$ 
\begin{eqnarray*}
&&\left( \int |\partial ^{{\scriptsize \alpha /2}}G_{u,s}(y-y^{\prime
})|~|g(u,y^{\prime })|_{V}dy^{\prime }\right) ^{2}\leq C\mathcal{M}%
_{x}|g(u,x)|_{V}^{2}\times \\
&&\qquad \times \left( \int_{\sqrt{d}}^{\infty }(2\sqrt{d}+\rho )^{d}\left(
\int_{|w|=1}\sum_{j=1}^{d}|\partial _{j}\partial ^{{\scriptsize \alpha /2}%
}G_{u,s}(\rho w)|^{2}dw\right) ^{1/2}d\rho \right) ^{2} \\
&&\quad \leq C\mathcal{M}_{x}|g(u,x)|_{V}^{2}\kappa (u,s),
\end{eqnarray*}%
where%
\begin{equation*}
\kappa (u,s)=\left( \int_{1}^{\infty }\rho ^{d}\left(
\int_{|w|=1}\sum_{j=1}^{d}|\partial _{j}\partial ^{{\scriptsize \alpha /2}%
}G_{u,s}(\rho w)|^{2}dw\right) ^{1/2}d\rho \right) ^{2}.
\end{equation*}%
By (\ref{maf3}),%
\begin{equation*}
\kappa (u,s)=(s-u)^{-\frac{2d}{{\scriptsize \alpha }}-1-\frac{2}{%
{\scriptsize \alpha }}}\left( \int_{1}^{\infty }\rho ^{d}\left(
\int_{|w|=1}\sum_{j=1}^{d}|F_{u,s}^{j}(\rho (s-u)^{-\frac{1}{{\scriptsize %
\alpha }}}w)|^{2}dw\right) ^{1/2}d\rho \right) ^{2}.
\end{equation*}%
Changing the variable of integration, $\rho (s-u)^{-\frac{1}{{\scriptsize %
\alpha }}}=r,$ and using H\"{o}lder's inequality, we get%
\begin{eqnarray*}
{}\kappa (u,s) &=&(s-u)^{-1}\left( \int_{(s-u)^{-\frac{1}{{\scriptsize %
\alpha }}}}^{\infty }r^{d}\left(
\int_{|w|=1}\sum_{j=1}^{d}[F_{u,s}^{j}(rw)]^{2}dw\right) ^{1/2}dr\right) ^{2}
\\
&\leq &(s-u)^{-1}\int_{(s-u)^{-\frac{1}{{\scriptsize \alpha }}}}^{\infty
}r^{-1-\frac{{\scriptsize \alpha }}{2}}dr\int_{0}^{\infty }r^{2d+1+\frac{%
{\scriptsize \alpha }}{2}}\int_{|w|=1}\sum_{j=1}^{d}[F_{u,s}^{j}(rw)]^{2}dwdr
\\
&\leq &C(s-u)^{-\frac{1}{2}}\int \sum_{j=1}^{d}\Bigl[|x|^{\frac{d}{2}+1+%
\frac{{\scriptsize \alpha }}{4}}F_{u,s}^{j}(x)\Bigr]^{2}dx.
\end{eqnarray*}%
Hence, by Parseval's equality,%
\begin{equation*}
{}\kappa (s,u)\leq C(s-u)^{-\frac{1}{2}}\int \sum_{j=1}^{d}\big\vert\partial
^{\frac{d}{2}+1+\frac{{\scriptsize \alpha }}{4}}\mathcal{F}F_{u,s}^{j}(\xi )%
\big\vert^{2}d\xi .
\end{equation*}%
Due to our assumptions \textbf{A}, \textbf{B} and Lemma \ref{auxl2}, the
last integral is finite. Therefore%
\begin{eqnarray}
\int_{Q_{0}}(Gg)^{2}dsdy &\leq &C\int_{-2}^{0}\int_{-12}^{s}(s-u)^{-\frac{1}{%
2}}\mathcal{M}_{x}|g(u,x)|_{V}^{2}duds  \notag \\
&=&C\left(
\int_{-12}^{-2}\int_{-2}^{0}l(s,u,x)~dsdu+\int_{-2}^{0}%
\int_{u}^{0}l(s,u,x)~dsdu\right)  \notag \\
&\leq &C\int_{-12}^{0}\mathcal{M}_{x}|g(u,x)|_{V}^{2}du\leq C\mathcal{M}_{t}%
\mathcal{M}_{x}|g(t,x)|_{V}^{2}  \label{eq25}
\end{eqnarray}%
for all $(t,x)\in Q_{0}$ with%
\begin{equation*}
l(s,u,x)=(s-u)^{-\frac{1}{2}}\mathcal{M}_{x}|g(u,x)|_{V}^{2}.
\end{equation*}%
$.$

5$^{0}.$ We will show that in the case (3)%
\begin{equation}
\int_{Q_{0}}|Gg(s,y)-Gg(t^{\prime },x^{\prime })|^{2}dsdy\leq C\mathcal{M}%
_{t}\mathcal{M}_{x}|g|_{V}^{2}(t,x)  \label{2.10}
\end{equation}%
with all $(t,x),(t^{\prime },x^{\prime })\in Q_{0}$. We estimate the
Lipschitz constant of $Gg$ in $t$ and $x$. Obviously, for each $(s,y),\
(t^{\prime },x^{\prime })\in Q_{0}$ 
\begin{equation}
\left\vert Gg(s,y)-Gg(t^{\prime },x^{\prime })\right\vert \leq C\left(
\sup_{(s,y)\in Q_{0}}|\nabla Gg(s,y)|+\sup_{(s,y)\in Q_{0}}|\partial
_{s}Gg(s,y)|\right) .  \label{maf4}
\end{equation}

First we estimate $|\nabla Gg(s,y)|$ in (\ref{maf4}). Let $\varphi \in
C_{0}^{\infty }(\mathbf{R}^{d}),0\leq \varphi \leq 1,\varphi (x)=1$ if $%
|x|\leq 2\sqrt{d}$, $\varphi (x)=0$ if $|x|<3\sqrt{d}$, and%
\begin{eqnarray*}
g_{2}(u,y^{\prime }) &=&g(u,y^{\prime })\varphi (y^{\prime }), \\
g_{1}(u,y^{\prime }) &=&g(u,y^{\prime })\left( 1-\varphi (y^{\prime
})\right) ,(u,y^{\prime })\in \mathbf{R}^{d+1}\text{.}
\end{eqnarray*}%
Since $g(u,y^{\prime })=0$ if $u\geq -8,$ applying H\"{o}lder's and
Minkowski's inequalities, we derive for $s\in \lbrack -2,0],|y|\leq 1,$ 
\begin{eqnarray*}
|\nabla Gg(s,y)|^{2} &\leq &\int_{-\infty }^{-8}\left\vert \int \nabla
\partial ^{{\scriptsize \alpha /2}}G_{u,s}(y-y^{\prime })g(u,y^{\prime
})dy^{\prime }\right\vert _{V}^{2}du \\
&\leq &2\int_{-\infty }^{-8}\left( \int_{|y^{\prime }|>2\sqrt{d}}|\nabla
\partial ^{{\scriptsize \alpha /2}}G_{u,s}(y-y^{\prime
})|~|g_{1}(u,y^{\prime })|_{V}dy^{\prime }\right) ^{2}du \\
&&+2\int_{-\infty }^{-8}\left( \int_{|y^{\prime }|\leq 3\sqrt{d}}|\nabla
\partial ^{{\scriptsize \alpha /2}}G_{u,s}(y-y^{\prime
})|~|g_{2}(u,y^{\prime })|_{V}dy^{\prime }\right) ^{2}du \\
&=&2(A_{1}(s,y)+A_{2}(s,y)).
\end{eqnarray*}%
For any $(t,x)\in Q_{0},$ according to (\ref{maf3}) and Lemma \ref{r1}
(applied for $d=1),$%
\begin{eqnarray*}
A_{2}(s,y) &\leq &\int_{-\infty }^{-8}\sup_{|z|\leq 4\sqrt{d}}|\nabla
\partial ^{{\scriptsize \alpha /2}}G_{u,s}(z)|^{2}(\int_{|y^{\prime }|\leq 3%
\sqrt{d}}|g(u,y^{\prime })|_{V}dy^{\prime })^{2}du \\
&\leq &C\int_{-\infty }^{-8}\sup_{|z|\leq 4\sqrt{d}}|\nabla \partial ^{%
{\scriptsize \alpha /2}}G_{u,s}(z)|^{2}(\frac{1}{\text{mes~}B_{4\sqrt{d}}(x)}%
\int_{|x-y^{\prime }|\leq 4\sqrt{d}}|g(u,y^{\prime })|_{V}^{2}dy^{\prime })
\\
&\leq &C\int_{-\infty }^{-8}(s-u)^{-\frac{2d}{{\scriptsize \alpha }}-1-\frac{%
2}{{\scriptsize \alpha }}}\mathcal{M}_{x}(|g|_{V}^{2})(u,x)du\leq C\mathcal{M%
}_{t}\mathcal{M}_{x}(|g|_{V}^{2})(t,x)
\end{eqnarray*}%
According to Lemma \ref{r2} (in our case $R=\sqrt{d}$ and $R_{1}=2\sqrt{d}$),%
\begin{eqnarray*}
&&\left( \int_{|y^{\prime }|\geq 2\sqrt{d}}|\nabla \partial ^{{\scriptsize %
\alpha /2}}G_{u,s}(y-y^{\prime })|~|g_{1}(u,y^{\prime })|_{V}dy^{\prime
}\right) ^{2}\leq C\mathcal{M}_{x}|g(u,x)|_{V}^{2}\times  \\
&&\qquad \times \bigg(\int_{\sqrt{d}}^{\infty }(2\sqrt{d}+\rho )^{d}\bigg(%
\int_{|w|=1}\sum_{j=1}^{d}|\partial _{j}\nabla \partial ^{{\scriptsize %
\alpha /2}}G_{u,s}(\rho w)|^{2}dw\bigg)^{1/2}d\rho \bigg )^{2} \\
&&\quad \leq C\mathcal{M}_{x}|g(u,x)|_{V}^{2}\tilde{\kappa}(u,s),
\end{eqnarray*}%
where%
\begin{equation*}
\tilde{\kappa}(u,s)=\bigg (\int_{1}^{\infty }\rho ^{d}\bigg (%
\int_{|w|=1}\sum_{j=1}^{d}|\partial _{j}\nabla \partial ^{{\scriptsize %
\alpha /2}}G_{u,s}(\rho w)|^{2}dw\bigg )^{1/2}d\rho \bigg )^{2}.
\end{equation*}

By (\ref{maf3}) and H\"{o}lder's inequality%
\begin{eqnarray*}
\tilde{\kappa}(u,s) &=&(s-u)^{-p}\bigg(\int_{1}^{\infty }\rho ^{d}\bigg(%
\int_{|w|=1}\sum_{j,k=1}^{d}\Big[F_{u,s}^{j,k}(\rho (s-u)^{-\frac{1}{%
{\scriptsize \alpha }}}w)\Big]^{2}dw\bigg)^{1/2}d\rho \bigg)^{2} \\
&\leq &(s-u)^{-p}\int_{1}^{\infty }\rho ^{-2}d\rho \int_{1}^{\infty }\rho
^{2d+2}\int_{|w|=1}\sum_{j,k=1}^{d}\Bigl[F_{u,s}^{j,k}\big(\rho (s-u)^{-%
\frac{1}{{\scriptsize \alpha }}}w\big)\Bigr]^{2}dwd\rho \\
&=&(s-u)^{-p}\int_{|x|\geq 1}|x|^{d+3}\sum_{j,k=1}^{d}\Bigl[F_{u,s}^{j,k}%
\big((s-u)^{-\frac{1}{{\scriptsize \alpha }}}x\big)\Bigr]^{2}dx
\end{eqnarray*}%
with $p=\frac{2d+4}{{\scriptsize \alpha }}+1$.

Changing the variable of integration, $y=(s-u)^{-\frac{1}{{\scriptsize %
\alpha }}}x$, we get by Parseval's equality%
\begin{eqnarray*}
\tilde{\kappa}(u,s) &\leq &(s-u)^{-1-\frac{1}{{\scriptsize \alpha }}}\int
|y|^{d+3}\sum_{j,k=1}^{d}\big[F_{u,s}^{j,k}(y)\big]^{2}dy \\
&=&(s-u)^{-1-\frac{1}{{\scriptsize \alpha }}}\int \sum_{j,k=1}^{d}\Big\vert%
\partial ^{\frac{d+3}{2}}\mathcal{F}F_{u,s}^{j,k}(\xi )\Big\vert^{2}d\xi .
\end{eqnarray*}%
Due to our assumptions and Lemma~\ref{auxl2}, the last integral is finite.
Hence, 
\begin{equation*}
\tilde{\kappa}(u,s)\leq C(s-u)^{-1-\frac{1}{{\scriptsize \alpha }}}
\end{equation*}%
and for $(s,y)\in Q_{0},$%
\begin{equation*}
A_{1}(s,y)\leq \int_{-\infty }^{-8}\mathcal{M}_{x}|g|_{V}^{2}(u,x)\tilde{%
\kappa}(u,s)du\leq C\int_{-\infty }^{-8}\mathcal{M}%
_{x}|g|_{V}^{2}(u,x)(s-u)^{-1-\frac{1}{{\scriptsize \alpha }}}du.
\end{equation*}

Therefore by Lemma \ref{r1} (in the case $d=1),$ for $(s,y)\in
Q_{0},(t,x)\in Q_{0},$ 
\begin{equation}
|\nabla Gg(s,y)|^{2}\leq A_{1}(s,y)+A_{2}(s,y)\leq C\mathcal{M}_{t}\mathcal{M%
}_{x}|g|_{V}^{2}(t,x).  \label{28}
\end{equation}

Now we estimate $|\partial _{s}Gg(s,y)|$. Applying H\"{o}lder's and
Minkowski's inequalities, we get for $(s,y)\in Q_{0},$ 
\begin{eqnarray*}
\lbrack \partial _{s}G(s,y)]^{2} &\leq &\int_{-\infty }^{-8}\left\vert \int
\partial _{s}\partial ^{{\scriptsize \alpha /2}}G_{u,s}(y-y^{\prime
})g(u,y^{\prime })dy^{\prime }\right\vert _{V}^{2}du \\
&\leq &2\int_{-\infty }^{-8}\bigg(\int_{|y^{\prime }|>2\sqrt{d}}|\partial
_{s}\partial ^{{\scriptsize \alpha /2}}G_{u,s}(y-y^{\prime
})|\,|g_{1}(u,y^{\prime })|_{V}dy^{\prime }\bigg)^{2}du \\
&&+\int_{-\infty }^{-8}\bigg(\int_{|y^{\prime }|\leq 3\sqrt{d}}|\partial
_{s}\partial ^{{\scriptsize \alpha /2}}G_{u,s}(y-y^{\prime
})|\,|g_{2}(u,y^{\prime })|_{V}dy^{\prime }\bigg)^{2}du \\
&=&2B_{1}(s,y)+2B_{2}(s,y).
\end{eqnarray*}%
According to Lemma~\ref{r2},%
\begin{eqnarray*}
&&\bigg(\int_{|y^{\prime }|>2\sqrt{d}}|\partial _{s}\partial ^{{\scriptsize %
\alpha /2}}G_{u,s}(y-y^{\prime })|\,|g_{1}(u,y^{\prime })|_{V}dy^{\prime }%
\bigg)^{2}\leq C\mathcal{M}_{x}|g|_{V}^{2}(u,x)\times \\
&&\qquad \times \biggl(\int_{\sqrt{d}}^{\infty }(2\sqrt{d}+\rho )^{d}\biggl(%
\int_{|w|=1}\sum_{j=1}^{d}\bigl[\partial _{j}\partial _{s}\partial ^{%
{\scriptsize \alpha /2}}G_{u,s}(\rho w)\bigr]^{2}dw\biggr)^{\frac{1}{2}%
}d\rho \biggr)^{2} \\
&&\quad \leq C\bar{\kappa}(u,s)\mathcal{M}_{x}|g|_{V}^{2}(u,x),
\end{eqnarray*}%
where%
\begin{equation*}
\bar{\kappa}(u,s)=\bigg(\int_{1}^{\infty }\rho ^{d}\bigg(\int_{|w|=1}%
\sum_{j=1}^{d}\big[\partial _{j}\partial _{s}\partial ^{{\scriptsize \alpha
/2}}G_{u,s}(\rho w)\big]^{2}dw\bigg)^{\frac{1}{2}}d\rho \bigg)^{2}.
\end{equation*}%
According to (\ref{maf3}), we have by H\"{o}lder's inequality 
\begin{eqnarray*}
\bar{\kappa}(u,s) &=&(s-u)^{-p}\bigg(\int_{1}^{\infty }\rho ^{d}\bigg(%
\int_{|w|=1}\sum_{j=1}^{d}\big[\bar{F}_{u,s}^{j}(\rho (s-u)^{-\frac{1}{%
{\scriptsize \alpha }}}w)\big]^{2}dw\bigg)^{\frac{1}{2}}d\rho \bigg)^{2} \\
&\leq &(s-u)^{-p}\int_{1}^{\infty }\rho ^{-1-{\scriptsize \alpha }}d\rho
\int_{1}^{\infty }\rho ^{2d+1+{\scriptsize \alpha }}\int_{|w|=1}%
\sum_{j=1}^{d}\big[\bar{F}_{u,s}^{j}(\rho (s-u)^{-\frac{1}{{\scriptsize %
\alpha }}}w)\big]^{2}dwd\rho \\
&\leq &C(s-u)^{-p}\int_{|x|\geq 1}|x|^{d+2+{\scriptsize \alpha }%
}\sum_{j=1}^{d}\big[\bar{F}_{u,s}^{j}((s-u)^{-\frac{1}{{\scriptsize \alpha }}%
}x)\big]^{2}dx,
\end{eqnarray*}%
where $p=\frac{2d}{{\scriptsize \alpha }}+3+\frac{2}{{\scriptsize \alpha }}$%
. Changing the variable of integration, $y=(s-u)^{-\frac{1}{{\scriptsize %
\alpha }}}x$, we get by Parseval's equality 
\begin{eqnarray*}
\bar{\kappa}(u,s) &\leq &C(s-u)^{-2}\int \sum_{j=1}^{d}\big[|y|^{\frac{d}{2}%
+1+\frac{{\scriptsize \alpha }}{2}}\bar{F}_{u,s}^{j}(y)\big]^{2}dy \\
&\leq &C(s-u)^{-2}\int \sum_{j=1}^{d}\big\vert\partial ^{\frac{d}{2}+1+\frac{%
{\scriptsize \alpha }}{2}}\mathcal{F}\bar{F}_{u,s}^{j}(\xi )\big\vert%
^{2}d\xi .
\end{eqnarray*}%
Due to our assumptions and Lemma~\ref{auxl2}, the last integral is finite.
Hence, 
\begin{equation*}
\bar{\kappa}(u,s)\leq C(s-u)^{-2}
\end{equation*}%
and, by Lemma \ref{r1} $(d=1)$ it follows for $(s,y),(t,x)\in Q_{0}$, 
\begin{equation}
B_{1}(s,y)\leq C\int_{-\infty }^{-8}(s-u)^{-2}\mathcal{M}%
_{x}|g|_{V}^{2}(u,x)du\leq C\mathcal{M}_{t}\mathcal{M}_{x}|g|_{V}^{2}(t,x).
\label{eq29}
\end{equation}%
For any $(s,y),(t,x)\in Q_{0},$ according to (\ref{maf3}) and Lemma \ref{r1}
($d=1),$%
\begin{eqnarray*}
B_{2}(s,y) &\leq &\int_{-\infty }^{-8}\sup_{|z|\leq 4\sqrt{d}}|\partial
_{s}\partial ^{{\scriptsize \alpha /2}}G_{u,s}(z)|^{2}(\int_{|y^{\prime
}|\leq 3\sqrt{d}}|g(u,y^{\prime })|_{V}dy^{\prime })^{2}du \\
&\leq &C\int_{-\infty }^{-8}\sup_{|z|\leq 4\sqrt{d}}|\partial _{s}\partial ^{%
{\scriptsize \alpha /2}}G_{u,s}(z)|^{2}(\frac{1}{\text{mes~}B_{4\sqrt{d}}(x)}%
\int_{|x-y^{\prime }|\leq 4\sqrt{d}}|g(u,y^{\prime })|_{V}^{2}dy^{\prime })
\\
&\leq &C\int_{-\infty }^{-8}(s-u)^{-\frac{2d}{{\scriptsize \alpha }}-3}%
\mathcal{M}_{x}|g|_{V}^{2}(u,x)du\leq C\mathcal{M}_{t}\mathcal{M}%
_{x}(|g|_{V}^{2})(t,x).
\end{eqnarray*}%
Summarizing, we have for all $(s,y),(t,x)\in Q_{0},$%
\begin{equation}
|\nabla G(s,y)|^{2}+[\partial _{s}G(s,y)]^{2}\leq C\mathcal{M}_{t}\mathcal{M}%
_{x}(|g|_{V}^{2})(t,x)|_{V}^{2}.  \notag
\end{equation}%
Therefore (\ref{2.10}) follows and we showed that (\ref{2.7}) holds in the
first and second case.

6$^{0}$. Now we show that (\ref{2.7}) in the case (2)-(1)  and (\ref{2.10})
in the case (3) imply (\ref{2.6}). Let $\varphi $ be a continuos function on 
$\mathbf{R}$ with all bounded derivatives such that $0\leq \varphi \leq
1,\varphi (s)=0$ if $-8\leq s,\varphi (s)=1$ if $s\leq -9$. Let 
\begin{eqnarray*}
g_{1}(s,y) &=&g(s,y)\varphi (s),(s,y)\in \mathbf{R}^{d+1}, \\
g_{2} &=&g-g_{1}\text{.}
\end{eqnarray*}%
Then by (\ref{maf2}),%
\begin{eqnarray*}
|Gg-(Gg)_{Q_{0}}| &\leq &|G(g_{1}+g_{2})-Gg_{1}|+|Gg_{1}-\left(
Gg_{1}\right) _{Q_{0}}| \\
+|\left( Gg_{1}\right) _{Q_{0}}-(Gg)_{Q_{0}}| &\leq &Gg_{2}+\left(
Gg_{2}\right) _{Q_{0}}+|Gg_{1}-\left( Gg_{1}\right) _{Q_{0}}|
\end{eqnarray*}%
and%
\begin{equation*}
\left( Gg\right) _{Q_{0}}^{\#}\leq (Gg_{1})_{Q_{0}}^{\#}+2(Gg_{2})_{Q_{0}}
\end{equation*}

Now, by (\ref{2.2}) and (\ref{2.3}), the required inequality (\ref{2.6})
follows from (\ref{2.7}), (\ref{eq25}) and (\ref{2.10}). The first assertion
of the proposition is proved.

7$^{0}.$ The estimates in Besov spaces follow immediately because%
\begin{equation*}
\left( \partial ^{{\scriptsize \alpha /2}}\mathcal{I}g\right) _{j}=\partial
^{{\scriptsize \alpha /2}}\mathcal{I}g_{j}
\end{equation*}%
and we have shown that%
\begin{eqnarray*}
&&\int_{a}^{b}\int_{\mathbf{R}^{d}}\bigg(\int_{a}^{t}\big\vert\varphi
_{j}\ast f(s,t,x)\big\vert_{V}^{2}ds\bigg)^{p/2}dxdt \\
&&\qquad =\int_{a}^{b}\int_{\mathbf{R}^{d}}\bigg(\int_{a}^{t}\big\vert\big(%
\partial ^{{\scriptsize \alpha /2}}\mathcal{I}g(s,t,x)\big)_{j}\big\vert%
_{V}^{2}ds\bigg)^{p/2}dxdt \\
&&\qquad =\int_{a}^{b}\int_{\mathbf{R}^{d}}\bigg(\int_{a}^{t}\big\vert%
\partial ^{{\scriptsize \alpha /2}}\mathcal{I}g_{j}(s,t,x)\big\vert_{V}^{2}ds%
\bigg)^{p/2}dxdt \\
&&\qquad \leq C\int_{a}^{b}|g_{j}(s,\cdot )|_{V,p}^{p}ds.
\end{eqnarray*}

The proposition is proved.

\end{document}